\begin{document}
\newcommand{\sidebar}[1]{\vskip10pt\noindent
 \hskip.70truein\vrule width2.0pt\hskip.5em
 \vbox{\hsize= 4truein\noindent\footnotesize\relax #1 }\vskip10pt\noindent}

\newcommand{\omegaone}{\ensuremath{\omega_1}}
\newcommand{\lomegaone}{\ensuremath{\mathcal{L}_{\omega_1,\omega}}}
\newcommand{\alephs}[1]{\ensuremath{\aleph_{#1}}}
\newcommand{\alephalpha}{\alephs{\alpha}}
\newcommand{\alephomegaone}{\alephs{\omegaone}}
\newcommand{\alephalphaplus}{\alephs{\alpha+1}}
\newcommand{\beths}[1]{\ensuremath{\beth_{#1}}}
\newcommand{\bethalpha}{\beths{\alpha}}
\newcommand{\bethomegaone}{\beths{\omegaone}}
\newcommand{\bethalphaplus}{\beths{\alpha+1}}
\newcommand{\z}{\ensuremath{\mathcal{Z}}}
\newcommand{\M}{\ensuremath{\mathcal{M}}}
\newcommand{\N}{\ensuremath{\mathcal{N}}}
\newcommand{\A}{\ensuremath{\mathcal{A}}}
\newcommand{\B}{\ensuremath{\mathcal{B}}}
\newcommand{\F}{\ensuremath{\mathcal{F}}}
\newcommand{\D}{\ensuremath{\mathcal{D}}}
\newcommand{\C}{\ensuremath{\mathcal{C}}}
\newcommand{\E}{\ensuremath{\mathcal{E}}}
\newcommand{\G}{\ensuremath{\mathcal{G}}}
\renewcommand{\P}{\mathbb{P}}
\renewcommand{\C}{\mathbb{C}}
\newcommand{\apspec}{AP-\ensuremath{Spec(\phi)} }
\newcommand{\jepspec}{JEP-\ensuremath{Spec(\phi)} }
\newcommand{\mmspec}{MM-\ensuremath{Spec(\phi)} }
\newcommand{\apspecpsi}{AP-\ensuremath{Spec(\psi)} }
\newcommand{\jepspecpsi}{JEP-\ensuremath{Spec(\psi)} }
\newcommand{\mmspecpsi}{MM-\ensuremath{Spec(\psi)} }
\newcommand{\specpsi}{\ensuremath{Spec(\psi)} }

\newcommand{\lang}[1]{\ensuremath{\mathcal{L}_{#1}}}
\newcommand{\langhat}{\ensuremath{\widehat{\lang{}}}}
\newcommand{\kappaplus}{\ensuremath{\kappa^{+}}}
\newcommand{\kappaplusplus}{\ensuremath{\kappa^{++}}}
\newcommand{\continuum}{\ensuremath{2^{\aleph_0}}}
\newcommand{\twoalephone}{\ensuremath{2^{\aleph_1}}}
\newcommand{\dom}{\mathop{\mathrm{dom}}\nolimits}
\newcommand{\ran}{\mathop{\mathrm{ran}}\nolimits}
\newcommand{\baleph}{\ensuremath{\B(\aleph_1)}}
\def\bk{\mbox{\boldmath $K$}}
\def\subm{\prec_{\mbox{\scriptsize \boldmath $K$}}}

\newtheorem{theorem}{Theorem}[section]
\newtheorem{lemma}[theorem]{Lemma}
\newtheorem{prop}[theorem]{Proposition}
\newtheorem{corollary}[theorem]{Corollary}
\theoremstyle{definition}
\newtheorem{definition}[theorem]{Definition}
\newtheorem{claim}[theorem]{Claim}
\newtheorem{subclaim}{Subclaim}
\newtheorem{note}{Note}
\newtheorem{observation}[theorem]{Observation}
\newtheorem{openq}[theorem]{Open Question}
\newtheorem{openqs}[theorem]{Open Questions}
\newtheorem{fact}[theorem]{Fact}
\newtheorem{example}[theorem]{Example}
\newtheorem{xca}[theorem]{Exercise}
\theoremstyle{remark}
\newtheorem{remark}[theorem]{Remark}
\newtheorem{remarks}[theorem]{Remarks}
\newtheorem{convention}[theorem]{Convention}
\numberwithin{equation}{section}

\title{Kurepa trees and spectra of $\mathcal{L}_{\omega_1,\omega}$-sentences}

\author{ Dima Sinapova}
\address{ University of Illinois at Chicago, Mathematics Department, 851 S Morgan St, Chicago, IL
60607}
\email{sinapova@math.uic.edu}
\thanks{Sinapova was partially supported by the National Science Foundation, grants DMS-1362485 and Career-1454945.}

\author {Ioannis  Souldatos}
\address{University of Detroit Mercy, Mathematics Department, 4001 McNichols Ave, Detroit, MI 48221}
\email{souldaio@udmercy.edu}

\subjclass[2010]{Primary 03E75, 03C55 , Secondary 03E35, 03C75, 03C48, 03C52}

\keywords{Kurepa trees, Infinitary Logic, Abstract Elementary Classes, Spectra, Amalgamation, Maximal Models}

\date{\today}
\begin{abstract}
We use set-theoretic tools to make a model-theoretic contribution. In particular, we  construct a \emph{single} 
$\mathcal{L}_{\omega_1,\omega}$-sentence $\psi$ that codes Kurepa trees to prove the consistency of
the following:
\begin{enumerate}
 \item The spectrum of $\psi$ is consistently equal to $[\aleph_0,\aleph_{\omega_1}]$ and also consistently
equal to $[\aleph_0,2^{\aleph_1})$, where $2^{\aleph_1}$ is weakly inaccessible.

\item The amalgamation spectrum of $\psi$ is consistently equal to $[\aleph_1,\aleph_{\omega_1}]$ and
$[\aleph_1,2^{\aleph_1})$, where again $2^{\aleph_1}$ is weakly inaccessible.

This is the first example of an $\mathcal{L}_{\omega_1,\omega}$-sentence  whose spectrum and
amalgamation spectrum are consistently both right-open and right-closed. It also provides a positive answer to a question
in \cite{CharacterizableCardinals}.
 \item Consistently, $\psi$  has maximal models in finite, countable, and
uncountable many cardinalities. This complements the examples given in \cite{JEP} and \cite{Complete} of sentences with maximal
models in countably many cardinalities.

\item $2^{\aleph_0}<\aleph_{\omega_1}<2^{\aleph_1}$ and there exists an $\mathcal{L}_{\omega_1,\omega}$-sentence with
models in $\aleph_{\omega_1}$, but no models in $2^{\aleph_1}$.

This relates to a conjecture by Shelah that if $\aleph_{\omega_1}<2^{\aleph_0}$, then any $\mathcal{L}_{\omega_1,\omega}$-sentence
with a model of size $\aleph_{\omega_1}$ also has a model of size $2^{\aleph_0}$. Our result proves that $2^{\aleph_0}$ can not be
replaced by $2^{\aleph_1}$, even if $2^{\aleph_0}<\aleph_{\omega_1}$.

\end{enumerate}

\end{abstract}

\maketitle

\section{Introduction}
\begin{definition}\label{chardef} For an $\lomegaone$-sentence $\phi$, the (model-existence) \emph{spectrum} of $\phi$ is the set
\[Spec(\phi)=\{\kappa|\exists M\models\phi \text{ and } |M|=\kappa\}.\]
If $Spec(\phi)=[\aleph_0,\kappa]$, we say that $\phi$ \emph{characterizes} $\kappa$.

The \emph{amalgamation spectrum} of $\phi$, \apspec for short, is the set of all
cardinals $\kappa$ so that (i) $\phi$ has at least one model of size $\kappa$ and (ii) the models of $\phi$ of size 
$\kappa$ satisfy the amalgamation property\footnote{The amalgamation 
property is not unique, but rather it depens on the embeddings used. In this paper we consider amalgamation under the 
substructure relation. See definition of $(\bk, \subm)$ before Lemma \ref{obs1}}. Similarly define \jepspec
the \emph{joint embedding spectrum} of $\phi$.

The \emph{maximal models spectrum} of $\phi$ is the set
\[ \text{\mmspec=}\{\kappa|\exists M\models \phi \text{ and $M$ is maximal}\}.\]
\end{definition}

Morley and L\'{o}pez-Escobar independently established that all cardinals that are characterized by an $\lomegaone$- sentence are
smaller than $\bethomegaone$.\footnote{See \cite{KeislerModelTheory} for more details.}
\begin{theorem} [Morley, L\'{o}pez-Escobar] \label{HanfNumber}
Let $\Gamma$ be a countable set of sentences of $\lomegaone$.  If $\Gamma$ has models of
cardinality $\beth_\alpha$ for all
$\alpha<\omega_1$, then it has models of all infinite cardinalities.
\end{theorem}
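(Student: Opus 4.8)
The plan is to prove the statement by the Ehrenfeucht--Mostowski method adapted to \lomegaone: extract an infinite set of order-indiscernibles inside a model of $\Gamma$, and then stretch that set along index sets of arbitrary size to manufacture models in every infinite cardinality.

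First I would reduce to a countable fragment. Fix a countable fragment $\mathcal{F}$ of \lomegaone\ that contains $\Gamma$ and is closed under subformulas, and expand the (countable) language by Skolem functions for each formula $\exists x\,\phi$ of $\mathcal{F}$, enlarging $\mathcal{F}$ to a still-countable fragment $\mathcal{F}^{*}$ in which every model of $\Gamma$ can be expanded so that Skolem hulls are $\mathcal{F}^{*}$-elementary substructures and hence again model $\Gamma$. Because $\mathcal{F}^{*}$ is countable, for each $n$ there are at most \continuum\ complete $\mathcal{F}^{*}$-$n$-types, so the colouring of increasing $n$-tuples by their $\mathcal{F}^{*}$-type uses at most \continuum\ colours; crucially, this colouring records the truth of the infinitary formulas of the fragment as well, not merely the finitary ones.

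The heart of the argument is the extraction of indiscernibles. Using the hypothesis that $\Gamma$ has models of size \bethalpha\ for every $\alpha<\omegaone$ --- that is, models cofinal below \bethomegaone\ --- I would apply the Erd\H{o}s--Rado theorem $\beth_n(\kappa)^{+}\to(\kappa^{+})^{n+1}_{\kappa}$, colouring finite increasing tuples of a sufficiently large model by their complete $\mathcal{F}^{*}$-type, to produce a single \emph{blueprint} $\Phi$ (a complete, coherent description of the $\mathcal{F}^{*}$-type of increasing tuples) realized by an infinite $\mathcal{F}^{*}$-order-indiscernible set inside a model of $\Gamma$. I expect this to be the main obstacle. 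Since the colouring has \continuum\ colours rather than finitely many, a single model of fixed size \bethalpha\ need not contain any infinite indiscernible set, and the successive refinements required to achieve homogeneity in all arities simultaneously consume the model cofinally in \bethomegaone; it is exactly here that the full strength of the hypothesis ($\beth$-towers of every countable height) is needed. The non-compactness of \lomegaone\ is what forbids the usual first-order shortcut of obtaining indiscernibles by a compactness argument, so the blueprint must instead be pinned down directly from the Erd\H{o}s--Rado data.

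Finally I would carry out the stretching. Given the blueprint $\Phi$ and any linear order $I$, form the term model $M(\Phi,I)$ whose elements are the Skolem terms in increasing tuples from $I$, identified and interpreted according to $\Phi$. Since every element is a term in finitely many indiscernibles and every formula of $\mathcal{F}^{*}$ has finitely many free variables, the truth value in $M(\Phi,I)$ of any $\mathcal{F}^{*}$-formula --- including the countable conjunctions and disjunctions of the fragment, by a well-founded induction on formulas --- is determined by $\Phi$ alone, hence independent of $I$. Because the infinite indiscernible set sits in a model of $\Gamma$, its Skolem hull models $\Gamma$, so $M(\Phi,I)\models\Gamma$ for every $I$. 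Taking $|I|=\lambda$ yields a model of $\Gamma$ of cardinality $\lambda$ for each infinite $\lambda$, which is the desired conclusion. The points needing care beyond the extraction are only routine: verifying that adjoining Skolem functions keeps the fragment countable, and that the indiscernibles have order type at least $\omega$, so that every finite relative configuration already occurs and the blueprint genuinely transfers to an arbitrary $I$.
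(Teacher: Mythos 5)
Your proposal has the right overall architecture --- Skolemize inside a countable fragment, extract a blueprint, stretch term models over arbitrary linear orders --- which is indeed the skeleton of Morley's classical argument. (Note the paper does not prove this theorem; it cites it, so the comparison here is with the standard proof.) Your stretching step is essentially correct as far as it goes, and you correctly identify that the continuum-many-colours issue and the failure of compactness are where the difficulty lives. But the proposal has a genuine gap at exactly that point, and the gap is not a technicality: it is the main content of the theorem.

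The gap is your requirement that the blueprint $\Phi$ be \emph{realized by an infinite $\mathcal{F}^{*}$-order-indiscernible set inside a model of} $\Gamma$. Your verification that $M(\Phi,I)\models\Gamma$ leans entirely on this (it is what makes the countable-conjunction clause of your induction work: a genuine tuple in a genuine model satisfies $\bigwedge_i\psi_i$ iff it satisfies every $\psi_i$). However, the hypothesis supplies no single sufficiently large model, only separate models $M_\alpha$ of size $\beth_\alpha$ for each $\alpha<\omega_1$; inside any one $M_\alpha$ the iterated Erd\H{o}s--Rado refinements needed for homogeneity in all arities simultaneously exhaust the model after $\omega$ steps, and homogeneous sets extracted from different $M_\alpha$'s live in different structures, so they cannot be intersected or amalgamated --- without compactness there is no way to realize the limit of the finite approximations. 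In fact the only known route to a model of $\Gamma$ containing a genuine infinite indiscernible sequence is the conclusion of the theorem itself, so your proof order is circular at this step. Morley's proof never realizes the blueprint: one builds it as a branch through a tree of finite approximations, each \emph{big} in the sense that for every $\beta<\omega_1$ some $M_\alpha$ contains a set of size at least $\beth_\beta$ all of whose increasing tuples satisfy the approximation; deciding one more formula preserves bigness by Erd\H{o}s--Rado with finitely many colours. The missing idea is the clause that makes the \emph{term model} respect infinitary conjunctions: whenever $\neg\bigwedge_i\psi_i(t(\bar x))$ is in the blueprint, some specific conjunct $\neg\psi_{i_0}(t(\bar x))$ must be added while preserving bigness. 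This is an omitting-types argument: colour tuples by the least conjunct that fails (countably many colours, since the models $M_\alpha$ already omit the bad type), apply Erd\H{o}s--Rado, and use that the supremum of the countably many ordinals $\beta_\psi<\omega_1$ witnessing failure of the alternatives is still below $\omega_1$. That last step is where the full tower of length $\omega_1$ and the countability of the fragment interact, and it has no counterpart in your proposal; without it, the blueprint you construct may decide every $\psi_i(t(\bar x))$ true yet decide $\bigwedge_i\psi_i(t(\bar x))$ false, and the stretched structure then fails to be a model of $\Gamma$.
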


 In 2002, Hjorth (\cite{HjorthsKnightsModel}) proved the following.

\begin{theorem}[Hjorth] \label{Thm:Hjorth} For all $\alpha<\omegaone$, $\alephalpha$ is
characterized by a complete
$\lomegaone$-sentence.
\end{theorem}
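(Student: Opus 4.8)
The plan is to prove the theorem by induction on $\alpha<\omegaone$, but with a strengthened induction hypothesis: rather than merely asserting that $\alephalpha$ is characterized by a complete \lomegaone-sentence, I would carry along the stronger statement that $\alephalpha$ is characterized by a complete sentence $\sigma_\alpha$ whose countable model is \emph{homogeneous} in a suitable back-and-forth sense and which contains a distinguished unary predicate isolating a ``base'' set usable for coding. This extra homogeneity is what makes both the successor step and the verification of completeness go through. The base case $\alpha=0$ is handled by any complete sentence with a unique countable model and no uncountable model, for instance the Scott sentence of $(\mathbb{N},S,0)$ asserting that every element is a finite iterate of $S$ applied to $0$; this plainly has spectrum $\{\aleph_0\}$.

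\emph{Successor step.} Assume $\sigma=\sigma_\alpha$ is complete and characterizes $\kappa:=\alephalpha$. To characterize $\aleph_{\alpha+1}=\kappa^+$ I would expand the language by a unary predicate $U$, a binary relation $<$, and a binary function $F$, and consider the sentence $\sigma_{\alpha+1}$ asserting: (i) the relativization $\sigma^U$ holds, so $U$ carries a model of $\sigma$ and hence $|U|\le\kappa$; (ii) $<$ linearly orders the complement of $U$; and (iii) for every $x$, the map $y\mapsto F(x,y)$ injects the set of $<$-predecessors of $x$ into $U$. Clauses (ii) and (iii) are first order, so $\sigma_{\alpha+1}$ is the conjunction of $\sigma^U$ with a first-order sentence and is therefore in \lomegaone. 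The key cardinality computation is that a linear order in which every proper initial segment has size $\le\kappa$ must itself have size $\le\kappa^+$: if it had size $\kappa^{++}$ one could greedily build a strictly increasing sequence of length $\kappa^+$, and this sequence either has an upper bound, whose initial segment then has size $\kappa^+>\kappa$, or is cofinal, whence the order is a union of $\kappa^+$ many initial segments each of size $\le\kappa$ and so has size $\le\kappa^+$ --- either way a contradiction. Thus every model of $\sigma_{\alpha+1}$ has size $\le\kappa^+$, and size exactly $\kappa^+$ is realized by taking $U$ a model of $\sigma$ of size $\kappa$, the order to be $\kappa^+$, and $F(x,-)$ an injection of each ordinal $x<\kappa^+$ into $U$.

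\emph{Limit step.} For a countable limit $\lambda$, fix $\beta_n\nearrow\lambda$ and let $\sigma_{\beta_n}$ be the sentences already produced. Using a distinguished standard copy of $\mathbb{N}$ (forced countable exactly as in the base case) to index pieces, I would write a sentence asserting that the universe is partitioned into pieces $P_n$, $n\in\mathbb{N}$, with the relativization $\sigma_{\beta_n}^{P_n}$ holding on the $n$-th piece. Each piece then has size $\le\alephs{\beta_n}$, so the whole structure has size $\le\sum_n\alephs{\beta_n}=\alephs{\lambda}$, while choosing the pieces to have sizes cofinal in $\alephs{\lambda}$ realizes $\alephs{\lambda}$ exactly.

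The main obstacle is \emph{completeness}: bare characterization of each $\alephalpha$ is classical, and the real content of the theorem is that the characterizing sentence can be taken complete. To establish this I would show directly that $\sigma_{\alpha+1}$ (and $\sigma_\lambda$) has a unique countable model up to isomorphism, which by Scott's theorem is equivalent to completeness. This is precisely where the strengthened induction hypothesis is used: the homogeneity and uniqueness of the countable model of $\sigma$ sitting inside $U$, combined with the freedom to re-choose the coding functions $F(x,-)$, lets one run a back-and-forth between any two countable models of $\sigma_{\alpha+1}$, first matching their $U$-parts and then extending along the linear order. Verifying that this back-and-forth always continues --- i.e.\ that partial isomorphisms extend while respecting $<$, $U$, and the injectivity of $F$ --- is the delicate part, and it is also what dictates the exact homogeneity condition that must be threaded through the induction. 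Finally, the construction is consistent with Theorem~\ref{HanfNumber}, since $\alephalpha\le\bethalpha<\bethomegaone$ for every $\alpha<\omegaone$, so no conflict with the Hanf-number bound arises.
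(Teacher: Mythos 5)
This theorem is not proved in the paper at all: it is quoted verbatim from Hjorth \cite{HjorthsKnightsModel}, so there is no ``paper proof'' for your attempt to parallel, and your sketch has to stand on its own as a proof of Hjorth's theorem. It does not, and the gap is exactly at the point you flag as ``delicate.'' The successor-step sentence $\sigma_{\alpha+1}$ you write down --- $U$ carries a model of $\sigma$, the complement of $U$ is linearly ordered, and the predecessors of each point inject into $U$ via $F$ --- is simply not complete, and no back-and-forth can make it so: taking $U$ to be the unique countable model of $\sigma$, the complement may be ordered like $\omega$ in one countable model and like $\mathbb{Q}$ (or $\omega+\omega$, or any other countable order type) in another, and these models are non-isomorphic since the order type of $(\,\mathrm{complement}\,,<)$ is an isomorphism invariant. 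So $\sigma_{\alpha+1}$ has continuum-many countable models, and by Scott's theorem it cannot be complete regardless of how cleverly the partial maps are extended. Pinning down the countable order type (say by density axioms) does not rescue the argument either: the functions $F(x,\cdot)$ and their interaction with $U$ still admit non-isomorphic countable configurations unless the countable model of $\sigma$ has a strong homogeneity/amalgamation property \emph{and} that property is shown to be inherited by $\sigma_{\alpha+1}$ itself, so that the induction can continue. You invoke such a ``strengthened induction hypothesis'' but never formulate it, and proving that some such property can actually be propagated through the successor step is not a verification left to the reader --- it is the entire content of Hjorth's theorem. Your relativization-plus-linear-order construction (including the correct cardinality bound $|L|\le\kappa^+$ and the unproblematic limit stage) is the classical way to characterize $\aleph_\alpha$ by an \emph{incomplete} $\lomegaone$-sentence; the word ``complete'' is what separates that folklore argument from Hjorth's result.

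For comparison, Hjorth's actual proof proceeds by a quite different and more elaborate induction (building on Knight's model): the successor step does not hang a linear order off the previous model but constructs new candidate complete sentences via a homogeneous/generic expansion, and it is notably non-constructive --- it produces finitely many complete sentences and shows that at least one of them must characterize $\aleph_{\alpha+1}$, without determining which one. That asymmetry between the easy incomplete characterization and the hard complete one is precisely why the theorem is attributed to Hjorth rather than being folklore.
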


Combining Theorems \ref{HanfNumber} and \ref{Thm:Hjorth} we get under GCH that $\alephalpha$ is characterized by a complete
$\lomegaone$-sentence if and only if  $\alpha<\omegaone$.

Given these results, one might ask if it is consistent under the failure of GCH that $\alephomegaone$ is characterizable. The
answer is positive because one can simply force $\alephomegaone=\continuum$ and by \cite{MalitzsHanfNumber}, $\continuum$ can
be characterized by a complete $\lomegaone$-sentence. In order to block the possibility of characterizing $\alephomegaone$ by 
manipulating the value of e.g. $\continuum$,  in \cite{CharacterizableCardinals} the question was raised if any
cardinal outside the set $C$ which is defined to be the smallest set that contains $\aleph_0$ and is closed under 
successors, countable unions, countable products and powerset, can be characterized by an $\lomegaone$-sentence. Notice that $C$ 
contains all cardinals of the form $\aleph_\alpha$, for countable $\alpha$, and it follows by Theorems 
\ref{HanfNumber} and \ref{Thm:Hjorth}  that under GCH,  $\alephalpha\in C$ if and only if 
$\alpha$ is countable. Therefore, 
$\alephomegaone$ is the smallest cardinal that consistently does not belong to $C$. With this in mind, we consider the 
possibility of $\alephomegaone$ being a characterizable cardinal not in $C$. In Lemma \ref{smallestset} we prove that this can 
be consistently true, thus providing a positive 
answer to the question from \cite{CharacterizableCardinals}.

The importance of $\alephomegaone$  is also emphasized by a conjecture of Shelah that if 
$\alephomegaone<\continuum$, then any $\lomegaone$-sentence with a model of size $\alephomegaone$ also has a model of size 
$\continuum$. In \cite{ShelahsBorelSets}, Shelah proves the consistency of the conjecture. In Theorem \ref{spectrum} (see also 
Corollary \ref{shelahconj}) we prove that there is a sentence that consistently characterizes the maximum of $\continuum$ and 
$\alephomegaone$. It follows that 
consistently $\continuum<\alephomegaone<2^{\aleph_1}$ and there is an $\lomegaone$-sentence that characterizes $\alephomegaone$. 
This proves that $\continuum$ can not be replaced by $\twoalephone$ in the conjecture, even if
$2^{\aleph_0}<\alephomegaone$. Our example can not be used to refute Shelah's conjecture because our sentence always has a 
model of size $\continuum$. 

Another contribution of the current paper is in providing an $\lomegaone$-sentence whose spectrum (and amalgamation spectrum) 
exhibits quite peculiar behavior when we manipulate the underlying set theory. In particular, we provide the first example 
of an $\lomegaone$-sentence whose spectrum can consistently be right-open and right-closed. 
If $\kappa=\sup_{n\in\omega} \kappa_n$ 
 and $\phi_n$ characterizes $\kappa_n$, then $\bigvee_n \phi_n$ is a (necessarily incomplete) sentence that has spectrum 
$[\aleph_0,\kappa)$. All the previously known examples of $\lomegaone$-sentences with a right-open spectrum are variations of this 
example and have the form
$[\aleph_0,\kappa)$, with $\kappa$ of countable cofinality. In Corollary \ref{psistat} we prove that consistent with 
$2^{\aleph_1}$ being weakly inaccessible, our example has spectrum $Spec(\psi)=[\aleph_0,2^{\aleph_1})$. By Lemma 
\ref{bknotmax}, our methods can not be used to establish the consistency of
$Spec(\psi)=[\aleph_0,\aleph_{\omega_1})$, which remains open. It also open to find a \emph{complete} sentence with a right-open
spectrum, much less a complete sentence with a spectrum which is consistently right-open and right-closed.

In Theorem \ref{APSpectrum} we prove that the amalgamation spectrum contains exactly the uncountable cardinals in the 
model-existence spectrum. So, the same comments about the spectrum also apply to the amalgamation spectrum. 
Moreover, by manipulating the size of $\twoalephone$, Corollary \ref{Cor:APSpectra} implies that for
$\kappa$ regular with $\aleph_2\le \kappa\le \twoalephone$, $\kappa$-amalgamation for $\lomegaone$-sentences is non-absolute for
models of ZFC. This complements the result in \cite{MilovichSModelExistence} that under GCH, $\alephalpha$-amalgamation (under 
substructure again) is not absolute for $1<\alpha<\omega$. It is interesting to notice that the examples presented both in this 
paper and in \cite{MilovichSModelExistence} can not settle  the absoluteness question for $\aleph_1$-amalgamation, which 
remains open. We also notice that the amalgamation spectra in this paper are always non-empty, as they 
always contain $\aleph_1$. This is in contrast with the examples in \cite{MilovichSModelExistence}, which maybe consistently 
empty or non-empty.

Investigating the maximal models spectrum of our example, \mmspecpsi,  we prove in Theorem \ref{spectrum} that 
\mmspecpsi is the set of all cardinals $\kappa$ such that $\kappa$ is either equal to $\aleph_1$, or equal to $\continuum$,
or there exists a Kurepa tree with exactly $\kappa$-many branches. Manipulating the cardinalities on which there exist Kurepa
trees, we can produce maximal models in finite, countable, and uncountable many cardinalities, plus we can carefully control 
which these cardinalities are. Results concerning the maximal models spectrum can also be found in \cite{JEP} and 
\cite{Complete}. From \cite{JEP} we know that  if $\overline{\lambda}=\langle \lambda_i: i\le \alpha <\aleph_1\rangle$ is a 
strictly increasing
 sequence of characterizable cardinals, there is an $\lomegaone$-sentence $\phi_{\overline{\lambda}}$, which has the maximum 
number (that is  $2^{\lambda_i^+}$) non-isomorphic maximal models in cardinalities $\lambda_i^+$, for all $i\le\alpha$, and no
maximal models in any other cardinality. The sentence $\phi_{\overline{\lambda}}$ is incomplete. In contrast, the examples 
produced in \cite{Complete} are complete sentences. The example in the current paper is also an incomplete sentence. 

In \cite{GrossbergClassicationAECs}, Grossberg conjectured that for every Abstract Elementary Class (AEC) $\bk$ there exists a 
cardinal $\mu(LS(\bk))$ such that if $\bk$ has the $\mu(LS(\bk))$-amalgamation property then $\bk$ has the $\lambda$-amalgamation 
property 
for all $\lambda\ge \mu(LS(\bk))$. This cardinal $\mu(LS(\bk))$ is called the Hanf number for amalgamation. Baldwin and Boney in 
\cite{BaldwinBoneyHanfNumbers} proved the existence of a Hanf number for amalgamation, but their definition of a Hanf 
number is different than the one that Grossberg conjectured. Their result states that if $\mu$ is a 
strongly compact cardinal, $\bk$ is an AEC with $LS(\bk)<\mu$, and $\bk$ satisfies amalgamation cofinally below $\mu$, then $\bk$ 
satisfies amalgamation in all cardinals $\ge \mu$. The question was raised if the strongly compact upper bound was optimal for 
the Hanf number for amalgamation. 

Far from being optimal, a lower bound for the Hanf number for amalgamation is given by the examples in \cite{KLH}. There
it is proved that for every cardinal $\kappa$ and every $\alpha$ with $\kappa\le\alpha<\kappaplus$ there exists an AEC 
$W_\alpha$ with countable $LS(W_\alpha)$, the vocabulary is of size $\kappa$, $W_\alpha$ satisfies amalgamation up 
to $\beth_{\alpha}$, but fails amalgamation in $\beth_{\kappaplus}$.

Although in this paper we do not claim to provide any lower bounds for the Hanf number for amalgamation, we believe that 
the idea of coding $\kappa$-Kurepa trees, for $\kappa>\aleph_1$, can be pursued further with the goal of 
producing amalgamation spectra that exhibit similar behavior to the ones in this paper, but in higher cardinalities.

Finally, we want to emphasize again the fact that in this paper we produce a \emph{single} $\lomegaone$-sentence $\psi$ and all 
the results mentioned above follow by considering the various spectra of this sentence $\psi$ and how set-theory affects these 
spectra.  In particular, we prove the consistency of the following.

\begin{enumerate}
 \item $\continuum<\aleph_{\omega_1}$ and $\specpsi=[\aleph_0,\alephomegaone]$.
\item $\continuum<2^{\aleph_1}$, $2^{\aleph_1}$ is weakly inaccessible and $Spec(\psi)=[\aleph_0,2^{\aleph_1})$.

\item $\continuum<\aleph_{\omega_1}$ and \apspecpsi=$[\aleph_1,\alephomegaone]$.
\item $\continuum<2^{\aleph_1}$, $2^{\aleph_1}$ is weakly inaccessible and \apspecpsi=$[\aleph_1,2^{\aleph_1})$.


\item $\continuum<\aleph_{\omega_1}<2^{\aleph_1}$ and there exists an $\lomegaone$-sentence with
models in $\aleph_{\omega_1}$, but no models in $\twoalephone$.
\end{enumerate}

Section \ref{modeltheory} contains the description of the sentence $\psi$ and the results about the model-theoretic properties of
$\psi$.
Section \ref{Consistency} contains the consistency results.

\section{Kurepa Trees and $\lomegaone$}\label{modeltheory}

The reader can consult \cite{JechsSetTheory} about trees. The following definition summarizes all
that we
will use in this paper.
\begin{definition} Assume $\kappa$ is an infinite cardinal. A
$\kappa$-tree has height $\kappa$ and  each level has at most $<\kappa$ elements.
A $\kappa$-Kurepa tree is a $\kappa$-tree with at least $\kappaplus$ branches of
height $\kappa$. Kurepa trees, with no $\kappa$ specified, refer to $\aleph_1$-Kurepa trees.
For this paper we will assume that $\kappa$-Kurepa trees are \emph{pruned},
i.e. all maximal branches have height $\kappa^+$.

If $\lambda\ge\kappaplus$, a $(\kappa,\lambda)$-Kurepa tree is a $\kappa$-Kurepa tree with
exactly $\lambda$ branches of height $\kappa$.
$KH(\kappa,\lambda)$ is the statement that there exists a $(\kappa,\lambda)$-Kurepa tree.

Define $\B(\kappa)=\sup\{\lambda|KH(\kappa,\lambda) \text{ holds}\}$ and $\B=\B(\aleph_1)$.
\end{definition}

If $\kappa$-Kurepa trees exist, it is immediate that $\B(\kappa)$
is always between $\kappaplus$ and $2^\kappa$. We are interested in the case where $\B$ is a supremum but not a maximum. The next
lemma proves some restrictions when $\B$ is not a maximum. In Section \ref{Consistency} we prove that it is consistent with
ZFC that $\B$ is not a maximum.

\begin{lemma}\label{bknotmax}
If $\B(\kappa)$ is not a maximum, then
$cf(\B(\kappa))\ge\kappaplus$.
\end{lemma}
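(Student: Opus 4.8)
The plan is to prove the contrapositive: assuming $cf(\B(\kappa))\le\kappa$, I would build a single $(\kappa,\B(\kappa))$-Kurepa tree, thereby witnessing that the supremum $\B(\kappa)$ is actually attained and hence is a maximum. Write $\B=\B(\kappa)$ and $\delta=cf(\B)$, so $\delta\le\kappa<\kappaplus\le\B$ and in particular $\delta<\B$. First I would fix an increasing sequence $\langle\lambda_i : i<\delta\rangle$ in the set $\{\lambda : KH(\kappa,\lambda)\text{ holds}\}$ that is cofinal in $\B$; this is possible because that set has supremum $\B$ and cofinality $\delta$. For each $i$ fix a $(\kappa,\lambda_i)$-Kurepa tree $T_i$. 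Since the $\lambda_i$ are increasing and cofinal in $\B$ and $\delta\le\B$, one has $\sum_{i<\delta}\lambda_i=\B$.

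The construction I would use is a \emph{staggered grafting} of the $T_i$ onto a single trunk. Take a chain (the trunk) $\langle t_\alpha : \alpha<\kappa\rangle$, and for each $i<\delta$ attach a disjoint copy $T_i'$ of $T_i$ by declaring the root of $T_i'$ to be a child of $t_i$; thus the root of $T_i'$ sits at level $i+1$, and $T_i'$ occupies the levels $i+1+\gamma$ as $\gamma$ ranges over the levels of $T_i$. Call the resulting partial order $S$. Every node of $T_i'$ lies above the initial segment $\langle t_\beta : \beta\le i\rangle$ of the trunk, so the predecessors of each node are well ordered and $S$ is a tree; and since each $T_i$ is pruned and $i+1+\kappa=\kappa$, every node of $S$ extends to a branch of height $\kappa$, so $S$ is pruned of height $\kappa$ (provided its levels are small, addressed below).

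Next I would count. The height-$\kappa$ branches of $S$ are exactly the trunk together with, for each $i$, the height-$\kappa$ branches of $T_i'$ (each prefixed by $\langle t_\beta : \beta\le i\rangle$); hence $S$ has $1+\sum_{i<\delta}\lambda_i=\B$ branches of height $\kappa$. For the levels: the $\alpha$-th level of $S$ consists of $t_\alpha$ together with the level-$\alpha$ nodes of those $T_i'$ whose root has already appeared, i.e.\ those $i$ with $i+1\le\alpha$. There are at most $|\alpha|<\kappa$ such $i$, and each contributes fewer than $\kappa$ nodes. This is exactly where the staggering earns its keep: without it, in the case $\delta=\kappa$ all $\delta$ trees would feed into the early levels at once and a level could swell to size $\kappa$.

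The hard part — indeed the only delicate point — is precisely this level estimate, and it is where regularity of $\kappa$ enters. The $\alpha$-th level of $S$ has size at most a sum of fewer than $\kappa$ cardinals each below $\kappa$, which is $<\kappa$ because $\kappa$ is regular; so $S$ really is a $\kappa$-tree, and therefore a $(\kappa,\B)$-Kurepa tree, giving $KH(\kappa,\B)$ and the desired conclusion that $\B(\kappa)$ is a maximum. (For the cardinals of interest here, in particular $\kappa=\aleph_1$, $\kappa$ is regular; for singular $\kappa$ the staggering no longer controls the levels and a separate argument would be needed.)
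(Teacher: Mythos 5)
Your proof is correct and takes essentially the same approach as the paper: the paper likewise fixes a cofinal sequence $\langle T_i : i<cf(\B(\kappa))\rangle$ of Kurepa trees and combines them into a single tree by shifting each $T_{i+1}$ upward by $i$ levels (a staggered disjoint union, which your trunk construction merely makes literal by grafting the $i$-th tree above the node $t_i$), so that each level receives contributions from fewer than $\kappa$ many of the $T_i$. The regularity caveat you flag applies equally to the paper's proof, whose level-size verification is left to the reader and needs $\kappa$ regular in the same way; this is harmless for the paper, which applies the lemma only to $\kappa=\aleph_1$.
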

\begin{proof}
Towards contradiction assume that $cf(\B(\kappa))=\mu\le\kappa$. Let $\B(\kappa)=sup_{i\in\mu} \kappa_i$ and let
$(T_i)_{i\in\mu}$ be a collection of $\kappa$-Kurepa trees, where each $T_i$ has exactly $\kappa_i$-many cofinal branches. Create
 new
$\kappa$-Kurepa trees $S_i$  by induction on $i\le \mu$: $S_0$ equals $T_0$; $S_{i+1}$ equals the disjoint union of $S_i$
together with a
copy of $T_{i+1}$, arranged so that the $j^{th}$ level $T_{i+1}$ coincides with the $(j+i)^{th}$ level of $S_i$. At limit
stages take unions. We leave the verification to the reader that $S_{\mu}$ is
a $\kappa$-Kurepa tree with exactly $sup_{i\in\mu} \kappa_i=\kappa$ cofinal branches, contradicting the fact that $\B(\kappa)$ is
not a maximum.
\end{proof}

\begin{definition} Let $\kappa\le\lambda$ be infinite cardinals.
A sentence $\sigma$ in a language with a unary predicate $P$ admits $(\lambda,\kappa)$, if $\sigma$
has a model $M$ such that $|M|=\lambda$ and $|P^M|=\kappa$. In this case, we will say that $M$ is
of type $(\lambda,\kappa)$.
\end{definition}

From \cite{ChangKeislerModelTheory}, theorem $7.2.13$, we know
\begin{theorem}\label{Thm:ChangKeisler}There is a (first-order) sentence $\sigma$ such that for all
infinite cardinals
$\kappa$, $\sigma$ admits $(\kappaplusplus,\kappa)$ iff $KH(\kappaplus,\kappaplusplus)$.
\end{theorem}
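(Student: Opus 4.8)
The plan is to code a $\kappa^{+}$-tree together with a distinguished family of cofinal branches directly into a first-order structure, and let the two cardinality constraints $|M|=\kappaplusplus$ and $|P^{M}|=\kappa$ supply the combinatorial content. I would use a relational language with the unary predicate $P$, three further unary predicates $B,T,L$ marking disjoint sorts (branches, tree nodes, level indices), a linear order $<_{L}$ on $L$, a level map $\mathrm{lev}\colon T\to L$, a coloring $c\colon T\to P$, and an incidence relation $R\subseteq B\times T$ read as ``$t$ lies on $b$'' (functions replaced by their graphs to keep the language relational). The sentence $\sigma$ conjoins the first-order statements that the four sorts partition the universe; $<_{L}$ linearly orders $L$; for each $b\in B$ and each $\ell\in L$ there is a unique node of level $\ell$ on $b$, and the nodes on $b$ form a $<_{L}$-chain; branches are \emph{coherent}, i.e.\ if $b,b'$ share a node of level $\ell$ then they share the node of every level $\ell'<_{L}\ell$; distinct elements of $B$ name distinct branches; and, crucially, $c$ is injective on each level: $\forall s,t\in T\,(\mathrm{lev}(s)=\mathrm{lev}(t)\wedge c(s)=c(t)\rightarrow s=t)$.

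This last clause is the device that renders ``every level has at most $\kappa$ nodes'' first-order expressible. It says nothing about cardinality on its own, but once $|P^{M}|=\kappa$ is imposed from the outside, injectivity of $c$ on a level forces that level to inject into $P^{M}$, hence to have size $\le\kappa$, with no second-order quantification over levels. For the forward direction, given a $(\kappaplus,\kappaplusplus)$-Kurepa tree I interpret $L$ as its $\kappaplus$ levels, $T$ as its nodes, $B$ as its $\kappaplusplus$ cofinal branches, take $c$ to be on each level an injection of that (size $\le\kappa$) level into a fixed set of size $\kappa$, and let $\mathrm{lev},R$ be the evident relations. Then $|P^{M}|=\kappa$, $|M|=\kappaplusplus+\kappaplus+\kappa=\kappaplusplus$, and $M\models\sigma$, so $\sigma$ admits $(\kappaplusplus,\kappa)$.

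For the backward direction, from $M\models\sigma$ of type $(\kappaplusplus,\kappa)$ I reconstruct the tree from the branches rather than from $T^{M}$ directly: to each $b\in B^{M}$ associate $f_{b}\colon L^{M}\to P^{M}$ with $f_{b}(\ell)=c(\text{the node of }b\text{ at level }\ell)$, and consider the tree $T^{*}$ of initial segments of the $f_{b}$'s under $<_{L}$. Coherence guarantees that two branches pass through the same node of level $\ell$ exactly when their $f_{b}$'s agree up to $\ell$, so each level of $T^{*}$ is in bijection with the colors used at that level and hence has size $\le\kappa$; and $T^{*}$ carries at least $|B^{M}|$ cofinal branches. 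The point of working with the $f_{b}$'s is that this bound on levels comes for free from the coloring clause, independently of the (possibly large) sizes of $T^{M}$ and $L^{M}$.

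The main obstacle is matching the parameters of $T^{*}$ \emph{exactly} to $KH(\kappaplus,\kappaplusplus)$ rather than to a softer statement, and here first-order logic alone is powerless in two respects. First, $<_{L}$ cannot be forced to have order type $\kappaplus$; the real danger is that the $\kappaplusplus$ branches are supported on a short, wide initial segment (only $\le\kappa$ levels can already carry up to $2^{\kappa}$ branches), which would not be a $\kappaplus$-tree at all. Ruling this out and recovering height exactly $\kappaplus$ is the heart of the matter and is precisely where the gap-two arithmetic of the two-cardinal transfer enters; it is also where one expects to have to refine the naive sentence above rather than read the tree off verbatim. Second, $T^{*}$ has at least $\kappaplusplus$ cofinal branches but possibly more, so to land on exactly $\kappaplusplus$ I would pass to the subtree generated by a suitable family of $\kappaplusplus$ branches and check, by a levelwise counting argument of the kind used in Lemma~\ref{bknotmax}, that no additional cofinal branches appear at limit levels. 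Combining the two directions then yields the stated equivalence.
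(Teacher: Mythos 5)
Your proposal correctly reproduces one of the paper's devices (your injective coloring $c$ of each level into $P$ is equivalent to the paper's surjections $G(a,\cdot,\cdot)$ from $P$ onto each level), but it is missing the two clauses that carry the whole theorem, and the hole you acknowledge is exactly there. As written, your $\sigma$ fails the direction ``admits $(\kappa^{++},\kappa)$ implies $KH(\kappa^{+},\kappa^{++})$'' outright: nothing ties the length of $L$ or the size of the branch sort $B$ to $P$, so whenever $2^{\kappa}\ge\kappa^{++}$ you get a model of type $(\kappa^{++},\kappa)$ by taking $L$ of order type $\omega$, $T=\kappa^{<\omega}$ (each level has exactly $\kappa$ nodes, so your coloring clause is satisfied), and $\kappa^{++}$ branches chosen from $\kappa^{\omega}$. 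For $\kappa=\aleph_0$ this is a genuine counterexample: under PFA, $2^{\aleph_0}=\aleph_2$ and there are no Kurepa trees at all, yet this $\sigma$ admits $(\aleph_2,\aleph_0)$. The repair is not ``gap-two arithmetic of the two-cardinal transfer''; it is two further first-order clauses, and they are precisely the content of the Chang--Keisler construction the paper presents. First, for every non-maximal $a\in L$ a surjection $F(a,\cdot,\cdot)$ from $P$ onto the initial segment $L_{\le a}$, making $L$ $|P|^{+}$-like. Second --- and this is what kills your short-wide configuration --- a linear order $\prec$ with no maximum on the branch sort, together with surjections $H$ from $L$ onto every proper $\prec$-initial segment, making the branch sort $|L|^{+}$-like. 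Counting then finishes: in a model of type $(\kappa^{++},\kappa)$ the non-branch part has size at most $\kappa^{+}\cdot\kappa$, so the branch sort has size $\kappa^{++}$, which forces $|L|=\kappa^{+}$; a $\kappa^{+}$-like linear order of size $\kappa^{+}$ has cofinality $\kappa^{+}$, so restricting the coded tree to a cofinal $\kappa^{+}$-sequence of levels yields an honest $\kappa^{+}$-Kurepa tree with $\kappa^{++}$ distinct cofinal branches. No transfer theorem is involved anywhere.

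Your second worry, landing on \emph{exactly} $\kappa^{++}$ branches, cannot be resolved the way you propose: the claim that the subtree generated by $\kappa^{++}$ branches acquires no new cofinal branches at limit levels is consistently false. By Theorem \ref{thickkurepa} (Jin), quoted in this paper, it is consistent that some Kurepa tree has $2^{\aleph_1}>\aleph_2$ branches while no $\omega_1$-tree has $\lambda$ branches for any $\aleph_1<\lambda<2^{\aleph_1}$; in such a model the downward closure of any $\aleph_2$ branches is an $\omega_1$-tree with at least $\aleph_2$ branches, hence with exactly $2^{\aleph_1}$ of them, so no levelwise counting in the style of Lemma \ref{bknotmax} can succeed. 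This exactness issue is an artifact of the paper's definition of $KH(\kappa^{+},\kappa^{++})$ via ``exactly'': what the construction (the paper's or your corrected one) actually proves is the equivalence of admitting $(\kappa^{++},\kappa)$ with the existence of a $\kappa^{+}$-Kurepa tree having \emph{at least} $\kappa^{++}$ branches (the Kurepa-family formulation of Chang--Keisler), since a model of $\sigma$ lists $\kappa^{++}$ maximal branches on its top level but is never required --- and cannot be required first-order --- to enumerate all cofinal branches of the abstract tree it codes.
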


The idea of the proof of Theorem \ref{Thm:ChangKeisler} is also given  in \cite{ChangKeislerModelTheory}, but the details are 
left to the reader. Since the amalgamation property is heavily dependent on the particular formulation\footnote{This means it 
is possible to come up with two different sentences $\sigma_1$, $\sigma_2$ that both satisfy Theorem \ref{Thm:ChangKeisler}, but 
which exhibit different amalgamation spectra.}, we give a full description of sentence $\sigma$, including the vocabulary, before 
we proceed.

The vocabulary $\tau$ consists of the constant $0$, the unary symbols $P,L$, the binary symbols $S,V,T,<,\prec,H$, 
and the ternary symbols $F,G$. The idea is to build a $\kappa$-tree. $L$ is a set that corresponds to the ``levels'' of the tree. 
$L$
is linearly ordered by $<$ and $0$ is its minimum element. $L$ may or may not have a maximum element. Every element $a\in L$
that is not the maximum element (if any) has a successor $b$ that satisfies $S(a,b)$. We will 
freely denote the successor of $a$ by $S(a)$. The maximum element is not a successor. For every
$a\in L$, $V(a,\cdot)$ is the set of nodes at level $a$ and we assume that $V(a,\cdot)$ is disjoint from $L$. If $V(a,x)$, we will 
say that $x$ is at level $a$ and we may write $x\in V(a)$.  

$T$ is a tree ordering on $V=\bigcup_{a\in L} V(a)$. If $T(x,y)$, then $x$ is at some level strictly less than the
level of $y$. If $y\in V(a)$ and $b<a$, there is some $x$ so that $x\in V(b)$ and $T(x,y)$. If $a$ is a limit, that is 
neither a successor nor $0$, then two distinct elements in $V(a)$ can not have the same predecessors. If $m$ is the maximum 
element of $L$, $V(m)$ is the set of maximal branches through the 
tree. Both ``the height of $T$'' and ``the height of $L$'' refer to the order type of $(L,<)$. Although it is not
necessary for Theorem \ref{Thm:ChangKeisler}, we can stipulate that the Kurepa tree is pruned.

There is an old trick to bound the size of a linear order by $\kappaplus$ by bounding the size of each initial segment by 
$\kappa$. We use this trick to bound the size of $L$ by $\kappaplus$ and the size of $V(m)$ by $\kappaplusplus$. 

For every $a\in L$ that is not the maximum element of $L$ (if any),  there is a surjection
$F(a,\cdot,\cdot)$ from the predicate $P$ to $L_{\le a}=\{b\in L|b\le a\}$ and another surjection $G(a,\cdot,\cdot)$ from $P$ to
$V(a)$. This bounds the size of every initial segment $L_{\le a}$ and the size every $V(a)$ by $|P|$, and in return the size 
of $L$ by $|P|^+$.

If $L$ has a maximum element $m$, we linearly order the set of maximal branches $V(m)$ by $\prec$ so that there 
is no maximum element and require that  $H$ is a surjection from $L$ to each initial segment. It follows that the 
size of $V(m)$ is bounded by $|L|^+$.

The disjoint union of $L,P,\cup_{a\in L} V(a)$ gives the domain of the model.

Call $\sigma$ the (first-order) sentence that stipulates all the above.
If $\M$ is a model of $\sigma$ with $P^\M$ of size $\kappa$ and $\M$ has size $\kappaplusplus$,
then $L$ has a maximum element $m$, $|V^\M(m)|=\kappaplusplus$ and $|L^\M|=\kappaplus$. So, $\M$ codes a $|P|$-Kurepa tree. 
Conversely, from every $\kappa$-Kurepa tree, we can define a model of $\sigma$ with $|P|=\kappa$. This 
proves Theorem \ref{Thm:ChangKeisler}.

We want to emphasize here the fact that since well-orderings can not be characterized by an
$\lomegaone$-sentence, it is unavoidable that we will be working with non-well-ordered trees.
However, using an $\lomegaone$-sentence we can express the fact that $P$ is countably infinite. There are many ways to
do this. The simplest way is to introduce countably many new constants $(c_n)_{n\in\omega}$ and require that $\forall x, \;
P(x)\leftrightarrow \bigvee_n x=c_n$. 
Let $\phi$ be the conjunction of $\sigma$ together with the requirement that $P$ is countably
infinite. Then $\phi$ has models of size $\aleph_2$ iff there exist a Kurepa tree of size
$\aleph_2$ iff $KH(\aleph_1,\aleph_2)$.

Fix some $n\ge 2$. Then the above construction of $\sigma$ (and $\phi$) can be modified to
produce a first-order sentence $\sigma_n$ and the corresponding $\lomegaone$-sentence $\phi_n$ so
that $\phi_n$ has a model of size $\aleph_n$ iff there exist a Kurepa tree of size
$\aleph_n$ iff $KH(\aleph_1,\aleph_n)$. The argument breaks down at $\aleph_\omega$. Since
we will be dealing with Kurepa trees of size potentially larger than $\aleph_{\omega}$, we must make some modifications.

Let $\tau'$ be equal to $\tau$ with the symbols $\prec, H$ removed. Let $\sigma'$ be equal to
$\sigma$ with all requirements that refer to $\prec, H$ removed. Let $\psi$ be the conjunction of
$\sigma'$ and the requirement that $P$ is countably infinite as formulated above\footnote{Our formulation has the property that 
that if $M\subset N$ are two models of $\psi$, then $P^M=P^N$. That is, $P$ does not grow.}. 
Notice that both $\phi$ and $\psi$ are not complete sentences.

For any $\lambda\ge\aleph_2$, any
$(\aleph_1,\lambda)$-Kurepa tree gives rise to a model of $\psi$, but unfortunately, there are
models of $\psi$ of size $2^{\aleph_0}$, that do not code a Kurepa tree. These trees have countable height.  For instance 
consider the tree $(\omega^{\le\omega},\subset)$
which contains $2^{\aleph_0}$ many maximal branches.

The dividing line for models of $\psi$ to code Kurepa trees is the size of $L$. By definition
$L$
is $\aleph_1$-like, i.e. every initial segment has countable size. If in addition $L$ is
uncountable, then we can embed $\omega_1$ cofinally into $L$.\footnote{The embedding is not
necessarily continuous, i.e. it may not respect limits.} Hence, every model of $\psi$
of size $\ge\aleph_2$ and for which $L$ is uncountable, codes a Kurepa tree. Otherwise, the model
does not code a Kurepa tree.

Let $\bk$ be the collection of all models of $\psi$, equipped with the substructure relation. I.e.
for $M,N\in \bk$, $M\subm N$ if  $M\subset N$.

 \begin{lemma}\label{obs1}
If $M\subm N$, then
\begin{enumerate}
 \item $L^M$ is an initial segment of $L^N$;
 \item for every non-maximal $a\in L^M$, $V^M(a,\cdot)$ equals $V^N(a,\cdot)$. 
\item  the tree ordering is preserved.
\end{enumerate}
We will express $(1)-(3)$ by saying that ``(the tree defined by ) $M$ is an \emph{initial segment}
of (the
tree defined by) $N$''.
\begin{proof}

For part (1), first recall that if $M\subm N$, then $P^M=P^N$. Towards contradiction, assume 
that $L^N$ contains some point $x\notin L^M$ and there exists some $y\in L^M$ such that $x<y$. Then the function
$F^M(y,\cdot,\cdot)$ defined  on $M$ disagrees with the function $F^N(y,\cdot,\cdot)$ defined  on
$N$. Contradiction.

For part (2), the argument is similar to the argument for part (1), using the functions $G^M(y,\cdot,\cdot)$ and
$G^N(y,\cdot,\cdot)$ this time.

Part (3) is immediate from the definition.
\end{proof}

\end{lemma}

\begin{corollary}\label{initseg} Assume $M$ is an initial segment of $N$. Then:
\begin{itemize}
 \item If $L^{N}=L^{M}$, then $N$ differs from $M$ only in the maximal branches it
contains.
\item If $L^{M}$ is uncountable and $L^{N}$ is a strict end-extension of $L^{M}$, then $L^{M}$ does not have a maximum element 
and $L^{N}$ is a one-point end extension of $L^M$.
\item If $L^{M}$ has a maximum element and $L^{N}$ is a strict end-extension of $L^{M}$, then $M$ must be countable.
 \end{itemize}
\end{corollary}
\begin{proof} Part (1) is immediate from Lemma \ref{obs1} (2). 

For part (2), observe that if $L^M$ had a maximum element, then $L^N$ would not be $\aleph_1$-like. A similar contradiction we 
derive if we assume that $L^N$ contains at least two new points not in $L^M$. 

For part (3) first observe that $L^M$ is countable. Otherwise we get a contradiction from part (2). Then the 
statement follows 
from the requirement that in all models of $\psi$ all non-maximal levels are countable. 
\end{proof}

\begin{lemma} $(\bk,\subm)$ is an AEC with countable Lowenheim-Skolem number.
\end{lemma}
\begin{proof} We verify closure under unions of chains. The other properties for being an AEC are immediate. The requirements set 
forth by $\sigma'$ can be written using a $\forall\exists$ sentence. Therefore $\sigma'$ is preserved under unions of chains. The 
requirement that $P$ is countable was formulated in such a way that does not allow $P$ to gain new elements. Therefore, this 
requirement is also preserved under unions of chains.

To prove that $LS(\bk)$ is countable, consider $N$ a model of $\psi$ and some countable $X\subset N$. We need some 
countable model $M$ of $\psi$ that contains $X$ and $M\subm N$. If $L^N$ is countable, then take $M\subset N$ so that the 
domain of $M$ is the union of $P^N$, $L^N$, every non-maximal level of $V^N$, and all maximal branches from $V^N$ that 
belong to $X$ (if any). If $L^N$ is uncountable, then without loss of generality assume that it has a maximum element $m$. Let 
$L_0$ be a strict initial segment of $L^N$ that does not have a maximum element and such that if $x$ is a non-maximal element in 
$X$, then $x$ belongs to a level in $L_0$, and if $x,y$ are two maximal elements in $X\cap V^N$, then $x,y$ differ at some 
level in $L_0$. Take $M\subset N$ so that the domain of $M$ is the union of $P^N$, $L_0\cup \{m\}$, the elements 
of $V^N$ that belong to a level in $L_0$, and all maximal branches from $V^N$ that belong to $X$(if any). The reader can verify 
that in either case $M$ is a model in $\bk$ and contains $X$. 
\end{proof}

\begin{convention}
For the rest of the paper, when we talk about ``the models of $\psi$'' we will mean
$(\bk,\subm)$.
\end{convention}

The next theorem characterizes the spectrum and the
maximal models spectrum of $\psi$.

\begin{theorem}\label{spectrum}~ The spectrum of $\psi$ is characterized by the following
properties:
\begin{enumerate}
 \item $[\aleph_0,2^{\aleph_0}]\subset Spec(\psi)$;
 \item if there exists a Kurepa tree with $\kappa$ many branches, then
$[\aleph_0,\kappa]\subset Spec(\psi)$;
 \item no cardinal belongs to $Spec(\psi)$ except
those required by (1) and (2). I.e. if $\psi$ has a model of size $\kappa$, then either
$\kappa\le\continuum$, or there exists a Kurepa tree which $\kappa$ many branches.
\end{enumerate}

The maximal models spectrum of $\psi$ is characterized by the following:
\begin{enumerate}
\setcounter{enumi}{3}
 \item $\psi$ has maximal models in cardinalities $2^{\aleph_0}$ and $\aleph_1$;
  \item if there exists a Kurepa tree with exactly $\kappa$ many branches, then $\psi$ has a
maximal model in $\kappa$;
\item $\psi$ has maximal models only in those cardinalities required by (4) and (5).
\end{enumerate}
\end{theorem}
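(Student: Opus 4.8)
The plan is to separate the six clauses into a \emph{constructive} half, where I exhibit models of the required sizes directly from trees, and an \emph{analytic} half, where I bound the size and the maximality of an arbitrary model. Throughout I rely on Observation~\ref{obs1} and Corollary~\ref{initseg}: a model of $\psi$ is coded by a tree whose levels are indexed by the $\aleph_1$-like order $L$, whose non-top levels are countable (since $P$ is countably infinite and $F,G$ surject onto the initial segments and the levels), and whose top level $V(M,\cdot)$ is a set of maximal branches; moreover, if $M\subm N$ then either $L^N=L^M$ and $N$ only adds branches, or $L^N$ strictly end-extends $L^M$ and $M$ is countable. For clause~(1) I realize each $\kappa\le\continuum$ inside the tree $(\omega^{\le\omega},\subset)$ by taking all of its countably many finite nodes and placing exactly $\kappa$ of its $\continuum$ maximal branches on top. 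For clause~(2), given a Kurepa tree with $\kappa$ branches I argue one level higher: its nodes form an $\aleph_1$-like tree with countable levels, and placing $\lambda$ of its branches on top yields a model of size $\lambda$ for every $\aleph_1\le\lambda\le\kappa$, the smaller sizes being already furnished by clause~(1).

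Clauses~(4) and~(5) use the \emph{same} trees but now carry \emph{all} available branches. By Corollary~\ref{initseg} an uncountable model that already contains every cofinal branch of its tree admits no proper extension: no branch can be added because none is missing, and $L$ cannot be end-extended because the model is uncountable. Hence the full tree $(\omega^{\le\omega},\subset)$ with all $\continuum$ branches is maximal of size $\continuum$; an $\aleph_1$-tree with countable levels carrying exactly $\aleph_1$ cofinal branches (for instance the tree generated by the branches that are $0$ below $\alpha$ and $1$ from $\alpha$ onward, together with the all-$0$ branch) is maximal of size $\aleph_1$; and a Kurepa tree with exactly $\kappa$ branches, with all $\kappa$ present, is maximal of size $\kappa$. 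This gives clauses~(4) and~(5).

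For the analytic half, let $M\models\psi$ with $|M|=\kappa$. If $L^M$ is countable, then the tree has only countably many nodes, so its set of cofinal branches is an analytic subset of a Polish space; by the perfect set property this set is either countable or of size $\continuum$, and since $V(M,\cdot)$ injects into it we get $\kappa\le\continuum$, which is clause~(3) in this case. If $L^M$ is uncountable, I fix a cofinal embedding of $\omega_1$ into the $\aleph_1$-like order $L^M$ and restrict the tree to the selected levels, obtaining a genuine $\aleph_1$-tree $T^{*}$ with countable levels and height $\omega_1$. Distinct members of $V(M,\cdot)$ have distinct predecessor sets (the top level is a limit), hence differ at some level and therefore at some selected level, so they remain distinct cofinal branches of $T^{*}$; the $\aleph_1$ nodes then force either $\kappa=\aleph_1\le\continuum$ or, when $\kappa\ge\aleph_2$, that $T^{*}$ is a Kurepa tree with $\kappa$ branches, giving clause~(3). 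The maximal-models clause~(6) runs along the same dichotomy: a countable model can always be end-extended, so every maximal model is uncountable, and in the countable-$L$ case the perfect set property forces $\kappa=\continuum$, while in the uncountable-$L$ case maximality forces $V(M,\cdot)$ to list \emph{every} cofinal branch of $T^{*}$, so $\kappa$ is either $\aleph_1$ or the exact branch-number of a Kurepa tree.

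The step I expect to be the main obstacle is the passage, in the uncountable-$L$ case, from the branches \emph{realized in the model} to the \emph{true branch-count} of the extracted $\aleph_1$-tree. Because $L^M$ need not be well-ordered, one must first replace it by a cofinal copy of $\omega_1$ and check that no two realized branches are collapsed; this is the role of the limit clause forcing distinct predecessor sets at the top level. More delicate is matching the count \emph{exactly}: restricting to the selected levels and passing to the downward closure of the chosen branches can in principle introduce spurious branches at limit levels, so one must argue that the resulting $\aleph_1$-tree can be taken to have precisely $\kappa$ cofinal branches while keeping every level countable. For the maximal-models clause this is automatic, since a maximal model contains all cofinal branches of its tree and thereby pins the count to $\kappa$; for the plain spectrum the careful bookkeeping of this correspondence is the technical heart of the argument.
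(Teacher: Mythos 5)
Your proposal follows essentially the same route as the paper's own proof: the same constructive half (realizing each $\kappa\le 2^{\aleph_0}$ inside $(\omega^{\le\omega},\subset)$ and each $\lambda\le\kappa$ from a Kurepa tree by attaching a selected set of branches, with maximality obtained by attaching \emph{all} branches and invoking Corollary~\ref{initseg}), and the same analytic half via the dichotomy on whether $L^M$ is countable (branch set is a closed/analytic subset of a Polish space, so of size at most $2^{\aleph_0}$) or uncountable (cofinal embedding of $\omega_1$ yields a genuine $\aleph_1$-tree whose realized branches stay distinct, hence a Kurepa tree when $\kappa\ge\aleph_2$). The exactness issue you flag at the end is glossed in exactly the same way by the paper: its proof of clause (3) also only produces a Kurepa tree with \emph{at least} $\kappa$ branches (which is all the later corollaries need), while for the maximal-models clause the count is pinned exactly because maximality forces every cofinal branch to be realized, just as you argue.
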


\begin{proof} For (1) and (2), we observed already that $(\omega^{\le\omega},\subset)$ is a model
of $\psi$ and that every Kurepa tree gives rise to a model of $\psi$.

For (3), let $N$ be a model of $\psi$ of size $\kappa$. If $L^N$ is countable, then $N$ has size
$\le\continuum$. If $L^N$ is uncountable, then $N$ codes a $\kappa$-tree with
$|N|$-many branches. By the proof of theorem \ref{Thm:ChangKeisler}, this is a Kurepa tree, assuming that $|N|\ge \aleph_2$.
Otherwise, $|N|\le\aleph_1\le\continuum$.

To prove (4), notice that $(\omega^{\le\omega},\subset)$ is a maximal model of
$\psi$, and it is easy to construct trees with height $\omega_1$ and
$\le\aleph_1$ many branches.

Now, let $N$ code a Kurepa tree with exactly $\kappa$ many
branches. By Corollary \ref{initseg}, $N$ is maximal. This proves (5).

For (6), let $N$ be a maximal model. If $N$ was countable, we could end-extend it and
it would not be maximal. So, $N$ must be uncountable. We split into two cases depending on
the size of $L^N$. If $L^N$ is countable, assume without loss of generality, that it has
height $\omega$ (otherwise consider a cofinal subset of order type $\omega$). So, $(V^N,T^N)$ is a
pruned tree which is a subtree of $\omega^{\le\omega}$. The set of maximal branches through
$(V^N,T^N)$ is a closed subset of the Baire space (cf. \cite{KechrisClassical}, Proposition 2.4).
Since closed subsets of $\omega^\omega$ have size either $\aleph_0$ or $\continuum$, we conclude
that $N$ has size $\continuum$. The second case is when $L^N$ is uncountable. If $N$ has size $\aleph_1$, we are done. If $|N|\ge
\aleph_2$, by Corollary \ref{initseg} and maximality of $N$, the tree defined by $N$ contains exactly $|N|$ many maximal
branches. Therefore, $N$ defines a Kurepa tree.
\end{proof}

Recall that $\B=\baleph$ is the supremum of the size of Kurepa trees.

\begin{corollary}\label{spectrumcor}
\begin{enumerate}
 \item If there are no Kurepa trees, then $\specpsi$ equals $[\aleph_0,2^{\aleph_0}]$ and \mmspecpsi equals
$\{\aleph_1,\continuum\}$.
 \item If $\B$ is a maximum, i.e. there is a Kurepa tree of size
$\B$, then $\psi$ characterizes $\max\{2^{\aleph_0},\B\}$.
\item If $\B$ is not a maximum, then $Spec(\psi)$ equals either $[\aleph_0,2^{\aleph_0}]$ or $[\aleph_0,\B)$, whichever is
greater. Moreover $\psi$ has maximal models in $\aleph_1$, $2^{\aleph_0}$ and in cofinally many cardinalities
below $\B$.
\end{enumerate}
\end{corollary}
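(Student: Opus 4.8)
The plan is to read off all three parts directly from Theorem \ref{spectrum}, splitting into the three cases according to the behaviour of $\B$. Throughout, the spectrum statements will follow from clauses (1)--(3) and the maximal-model statements from clauses (4)--(6) of Theorem \ref{spectrum}; the only genuine work is to translate the hypotheses on $\B$ into statements about which Kurepa-tree sizes occur. For part (1), the assumption that there are no Kurepa trees makes clause (2) vacuous, so the only cardinals forced into $\specpsi$ are those in $[\aleph_0,\continuum]$ from clause (1), and by clause (3) no others appear; hence $\specpsi=[\aleph_0,\continuum]$. Likewise clause (5) is vacuous, so by clauses (4) and (6) the only maximal-model cardinalities are $\aleph_1$ and $\continuum$, giving $\mmspecpsi=\{\aleph_1,\continuum\}$.

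For part (2), if $\B$ is attained then there is a Kurepa tree with exactly $\B$ branches, so clause (2) yields $[\aleph_0,\B]\subseteq\specpsi$ while clause (1) yields $[\aleph_0,\continuum]\subseteq\specpsi$; together these give $[\aleph_0,\max\{\continuum,\B\}]\subseteq\specpsi$. Conversely, by clause (3) every $\kappa\in\specpsi$ either satisfies $\kappa\le\continuum$ or bounds the branches of some Kurepa tree and hence satisfies $\kappa\le\B$; so $\specpsi\subseteq[\aleph_0,\max\{\continuum,\B\}]$, and $\psi$ characterizes $\max\{\continuum,\B\}$.

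For part (3), with $\B$ a supremum but not a maximum, the set of sizes of Kurepa trees is cofinal in $\B$. I would first observe that for every $\lambda<\B$ either $\lambda\le\continuum$ (so $\lambda\in\specpsi$ by clause (1)) or there is a Kurepa tree of some size $\kappa$ with $\lambda\le\kappa<\B$ (so $\lambda\in\specpsi$ by clause (2)); hence $[\aleph_0,\B)\subseteq\specpsi$, and combined with $[\aleph_0,\continuum]\subseteq\specpsi$ this shows $\specpsi$ contains whichever of the two intervals is larger. For the reverse inclusion I would invoke clause (3) together with the non-attainment of $\B$ to conclude that $\B\notin\specpsi$ once $\B>\continuum$, so that $\specpsi$ equals exactly the larger of $[\aleph_0,\continuum]$ and $[\aleph_0,\B)$. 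The maximal-model claim is then immediate: clause (4) supplies maximal models in $\aleph_1$ and $\continuum$, and clause (5) supplies a maximal model in each Kurepa-tree size, which by cofinality gives maximal models in cofinally many cardinalities below $\B$.

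The one point requiring care, and the closest thing to an obstacle, is the comparison of $\continuum$ with $\B$ in part (3): one must check separately that when $\continuum\ge\B$ the interval $[\aleph_0,\B)$ is absorbed into $[\aleph_0,\continuum]$ and clause (3) pins the top of the spectrum at $\continuum$, whereas when $\continuum<\B$ it is precisely the non-attainment of $\B$ that keeps $\B$ out of $\specpsi$. Everything else reduces to routine bookkeeping of Theorem \ref{spectrum}.
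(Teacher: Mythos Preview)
Your proposal is correct and follows essentially the same approach as the paper: both arguments simply read each part off the clauses of Theorem \ref{spectrum}, with the only nontrivial step being the translation of the hypothesis on $\B$ into which Kurepa-tree sizes actually occur. Your treatment is in fact more explicit than the paper's, which dismisses (1) and (2) as immediate and handles (3) in two sentences; your careful case split on $\continuum$ versus $\B$ in part (3) spells out exactly what the paper leaves to the reader.
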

\begin{proof} (1) and (2) follow immediately from Theorem \ref{spectrum}. We only establish (3).
If $\B$ is not a maximum, then $[\aleph_0,\B)\subset Spec(\psi)$ and $Spec(\psi)$
equals either $[\aleph_0,2^{\aleph_0}]$ or $[\aleph_0,\B)$, whichever is greater. For the the last
assertion, assume $\B=\sup_i \kappa_i$ and for each $i$, there is a Kurepa tree with $\kappa_i$ many branches. Then
each $\kappa_i$ is in the \mmspecpsi by Theorem \ref{spectrum} (5).
\end{proof}

In Section \ref{Consistency} we prove the following consistency results.
\begin{theorem}\label{constthrm}
The following are consistent with ZFC:
\begin{enumerate}[(i)]
 \item ($2^{\aleph_0}<\aleph_{\omega_1}=\B<2^{\aleph_1}$) +``$\B$ is a maximum``, i.e. there
exists a Kurepa
tree of size $\alephs{\omega_1}$''.

\item ($\aleph_{\omega_1}=\B<2^{\aleph_0}$)+ ``$\B$ is a maximum''.
  \item ($2^{\aleph_0}<\B=2^{\aleph_1}$) + ``$2^{\aleph_1}$ is weakly inaccessible
   + ``for every $\kappa<\twoalephone$  there is a Kurepa tree with (at least) $\kappa$-many maximal branches, but no Kurepa tree
has exactly $\twoalephone$-many branches.''
 \end{enumerate}
 \end{theorem}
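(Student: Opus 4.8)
The plan is to isolate a single building block and then run three variations of it. For a cardinal $\lambda\ge\aleph_2$ let $Kr(\lambda)$ be the standard $\sigma$-closed forcing whose conditions are pairs $(t,f)$ with $t$ a normal tree of some countable successor height and $f$ a countable partial injection from $\lambda$ into the top level of $t$, ordered by end-extension of $t$ together with extension of $f$. The generic object is an $(\aleph_1,\lambda)$-Kurepa tree: the $\lambda$ indices give $\lambda$ distinct cofinal branches, and a density argument shows these are the only ones. Assuming CH in the ground model, $Kr(\lambda)$ is $\sigma$-closed (so it adds no reals and preserves $\aleph_1$ and CH) and, by a $\Delta$-system argument, $\aleph_2$-c.c.\ (so it preserves cofinalities $\ge\aleph_2$ and forces $\twoalephone=\lambda$). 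Granting these properties, each clause reduces to two tasks: realizing the prescribed values of $KH(\aleph_1,\cdot)$, which the generic trees do directly, and proving an upper bound on $\B$, which is where the entire difficulty lies.

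For (ii) I would force with $Kr(\alephomegaone)$ over a model of GCH. In the extension $W$ one has $\continuum=\aleph_1$, $\twoalephone=\alephomegaone$ and an $(\aleph_1,\alephomegaone)$-Kurepa tree, so $\B\ge\alephomegaone$; since no tree has more than $\twoalephone=\alephomegaone$ branches, $\B=\alephomegaone$ is a maximum. I then add $\mu>\alephomegaone$ Cohen reals with $\mu^{\aleph_0}=\mu$. This c.c.c.\ step raises $\continuum$ to $\mu>\alephomegaone$ and, by the classical branch lemma (a c.c.c.\ forcing adds no new cofinal branch to a tree of height $\omega_1$), adds no branch to any $\omega_1$-tree of $W$; combined with a capturing argument showing that every Kurepa tree of the final model, together with all of its branches, already lies in an intermediate extension satisfying $\twoalephone=\alephomegaone$, this keeps $\B=\alephomegaone$ a maximum. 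Part (i) uses the same skeleton but raises $\twoalephone$ instead of $\continuum$: after $Kr(\alephomegaone)$ I force $Add(\omega_1,\mu)$ with $\mu>\alephomegaone$, a $\sigma$-closed step that adds subsets of $\omega_1$ but no reals (keeping $\continuum<\alephomegaone$), and again argue that any Kurepa tree and all of its branches are captured in a sub-extension where $\twoalephone=\alephomegaone$, giving $\B=\alephomegaone<\twoalephone$ with $\alephomegaone$ a maximum.

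For (iii) I would start in a model of GCH with an inaccessible $\Lambda$ (so $\Lambda$ is strongly inaccessible there and becomes weakly inaccessible, equal to $\twoalephone$, after forcing) and force with the countable-support product $\mathbb{P}=\prod_{\aleph_2\le\kappa<\Lambda}Kr(\kappa)$. A countable-support product of $\sigma$-closed forcings is $\sigma$-closed, and by a $\Delta$-system argument using inaccessibility it is $\Lambda$-c.c., so $\continuum=\aleph_1$ and $\twoalephone=\Lambda$. The coordinate-$\kappa$ generic is an $(\aleph_1,\kappa)$-Kurepa tree, so $KH(\aleph_1,\kappa)$ holds for every $\kappa\in[\aleph_2,\Lambda)$ and hence $\B=\Lambda=\twoalephone$ as a supremum. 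The key point is that $\Lambda$ is not attained: given a Kurepa tree $T$ with $|T|=\aleph_1$, by $\Lambda$-c.c.\ and regularity of $\Lambda$ a nice name for $T$ uses a set $a$ of coordinates with $|a|<\Lambda$, so $T\in V[G\restriction a]$, where $\twoalephone<\Lambda$ because $|a|<\Lambda$ and $\Lambda$ is inaccessible. If the tail product adds no new cofinal branch to $T$, then all branches of $T$ lie in $V[G\restriction a]$ and so number $<\Lambda$; since $\Lambda$ is regular, no tree reaches $\Lambda$ branches, so $\B$ is a supremum but not a maximum, consistent with Lemma \ref{bknotmax}.

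The main obstacle, common to (i) and (iii), is exactly the branch-preservation claim used in the capturing arguments: a $\sigma$-closed forcing can add new cofinal branches to an $\omega_1$-tree, so ``all branches of $T$ live in the small sub-extension'' is not automatic (in contrast to the c.c.c.\ step of (ii), which the classical lemma settles for free). I would attack it by showing that the relevant tail forcings — the $\sigma$-closed blow-up in (i) and the tail sub-products $\prod_{\kappa\in b}Kr(\kappa)$ in (iii) — add no new cofinal branch to any $\omega_1$-tree of the intermediate model: a name for a new branch yields, via $\sigma$-closure, an uncountable splitting system of conditions, and the chain condition of the square of the forcing is invoked to extract a forbidden antichain. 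This same analysis is what guarantees that each generic tree retains \emph{exactly} its intended branches and that no tree accumulates extra branches from the remainder of the forcing. I expect that the bare $Kr(\lambda)$ may need to be refined with an additional sealing or approximation property to make this argument go through; this branch analysis is where essentially all of the work lies, the realization of the values of $KH(\aleph_1,\cdot)$ and the cardinal arithmetic being routine by comparison.
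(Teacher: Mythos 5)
Your treatment of parts (i) and (ii) rests on a cardinal-arithmetic claim that is outright false: no model of ZFC satisfies $\twoalephone=\alephomegaone$, since K\"onig's theorem gives $\mathrm{cf}(\twoalephone)\ge\aleph_2$ while $\mathrm{cf}(\alephomegaone)=\omega_1$. In the $Kr(\alephomegaone)$-extension one in fact has $\twoalephone=\aleph_{\omega_1+1}$. This error inverts where the work lies. For (i) no second forcing is needed at all: the paper's proof of part (1) of Theorem~\ref{constpart1} is exactly the model $V_0[H]$, where $\alephomegaone<\twoalephone$ holds automatically once $\B\le\alephomegaone$ is established. Conversely, the bound $\B\le\alephomegaone$, which you obtain from ``no tree has more than $\twoalephone=\alephomegaone$ branches,'' is precisely the hard part: a priori a Kurepa tree in this model could have $\aleph_{\omega_1+1}$ branches. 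Your fallback, capturing any Kurepa tree \emph{together with all of its branches} in an intermediate extension with $\twoalephone=\alephomegaone$, is unfixable as stated: such an intermediate model cannot exist, and an intermediate model with $\twoalephone<\alephomegaone$ cannot contain all $\alephomegaone$ branches of the generic tree itself. The paper instead captures the tree in a small submodel and then captures each branch \emph{separately}: Claim~\ref{claim1} shows that any branch which is new over the capturing submodel induces a generic filter for the ``one extra index'' poset $\mathbb{Q}$, hence is itself one of the generic branches and lies in some stage $V_0[H_\alpha\times\bar G]$ with $\alpha<\omega_1$; a pigeonhole argument then bounds the number of branches by $\alephomegaone$. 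Note that your $\sigma$-closure-based branch preservation cannot substitute for this: the quotient of the Kurepa forcing over the capturing subforcing is \emph{not} $\sigma$-closed over the intermediate model (it visibly adds cofinal branches to the generic tree, which no $\sigma$-closed forcing can do to an $\omega_1$-tree with countable levels), and this is exactly where a new idea is required. (Also, ``ccc forcing adds no branch to a tree of height $\omega_1$'' is false as stated --- a Suslin tree is a ccc forcing adding a branch to itself; the paper uses that the Cohen quotient is Knaster.)

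Your part (iii) is a genuinely different route from the paper's: Theorem~\ref{constpart2} does not force directly over an inaccessible, but quotes a model of L\"ucke, built from a Mahlo cardinal, satisfying $GMA_{\omega_1}$, CH, ``$\twoalephone$ weakly inaccessible,'' and a $\Sigma^1_1$ perfect-set property; trees with exactly $\kappa$ branches are then produced by applying $GMA_{\omega_1}$ to the poset $Kr(\kappa)$, and the perfect-set property excludes trees with $\twoalephone$ branches. Your product construction would use a weaker hypothesis (an inaccessible rather than a Mahlo), which would be a real gain, but it has two gaps. First, $\Lambda$-cc alone does not prevent the countable-support product from collapsing cardinals in $(\aleph_1,\Lambda)$, in which case $\Lambda$ need not remain a limit cardinal; you need the product to be $\aleph_2$-cc, which does hold under GCH but via a nested $\Delta$-system argument (on supports, and then on the domains of the $f$-parts), not via inaccessibility of $\Lambda$. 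Second, and more seriously, ``exactly $\kappa$ branches'' does not follow from your branch analysis. Your tail-preservation step is actually fine --- contrary to your hedging, no ``sealing refinement'' of $Kr(\lambda)$ is needed, since the tail products remain $\sigma$-closed over the intermediate models and a $\sigma$-closed forcing adds no cofinal branch to an $\omega_1$-tree with countable levels --- but that lemma only controls branches of trees lying in the model over which the tail forces. It says nothing about the branches $Kr(\kappa)$ adds to its \emph{own} generic tree, since the tree and its branches appear together. Without a Claim~\ref{claim1}-style density argument showing that every branch of the generic tree is forced to equal one of the $\kappa$ designated generic branches, you only get ``at least $\kappa$ and at most $\kappa^+$'' branches; that suffices to make $\B=\twoalephone$ a non-attained supremum, but not for the full statement of (iii).
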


Moreover, in (i) and (ii) we can replace $\alephomegaone$ by most cardinals below or
equal to $2^{\aleph_1}$ and $\continuum$ respectively. We also note that in (iii) we use a Mahlo cardinal to prove the 
consistency result.

From \cite{JinKurepaThick} we know the consistency of the following:
\begin{theorem}[R. Jin]\label{thickkurepa}
Assume the existence of two strongly inaccessible cardinals. It is consistent with $CH$ (or
$\neg CH$ ) plus $2^{\aleph_1}>\aleph_2$ that there exists a
Kurepa tree with $2^{\aleph_1}$ many branches and no $\omega_1$-trees have $\lambda$-many branches
for some $\lambda$ strictly between $\aleph_1$ and $2^{\aleph_1}$. In particular, no Kurepa trees
have less than $2^{\aleph_1}$ many branches.
\end{theorem}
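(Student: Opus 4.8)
Since this is Jin's theorem, the plan is to recover it by the forcing method, the full verification being the content of \cite{JinKurepaThick}; what I sketch here is the architecture. The starting point is a ground model $V\models\mathrm{GCH}$ equipped with two strongly inaccessible cardinals $\kappa<\lambda$, where the smaller is used to pin down the value of $\aleph_2$ and the larger to pin down $2^{\aleph_1}$. Both inaccessibles are needed precisely in order to reflect away every $\omega_1$-tree whose branch number falls in the forbidden open interval $(\aleph_1,2^{\aleph_1})$, which is the heart of the statement.

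First I would collapse with the Levy forcing $\mathrm{Coll}(\omega_1,<\kappa)$. This poset is countably closed, so it adds no reals and preserves $\aleph_1$ (keeping $CH$, or $\neg CH$ if a preliminary real-adding step has arranged it), and it is $\kappa$-c.c., so it makes $\kappa=\aleph_2$ while leaving $\lambda$ inaccessible. Next I would force a Kurepa tree carrying exactly $\lambda$ cofinal branches, using countable approximations $(t,f)$ in which $t$ describes the tree below a countable level and $f$ is a countable partial assignment of nodes to an index set of size $\lambda$. Under $CH$ this poset is again countably closed and $\aleph_2$-c.c., so it preserves cardinals, adds a thick Kurepa tree, and forces $2^{\aleph_1}=\lambda>\aleph_2$, with $\lambda$ remaining weakly inaccessible.

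The step I expect to be by far the hardest is the gap, namely showing that in the final model no $\omega_1$-tree $T$ has branch number strictly between $\aleph_1$ and $\lambda$. Here I would exploit two features of the construction. First, a chain-condition-plus-nice-name argument captures $T$ together with all of its branches inside an intermediate extension by a subforcing of size $<\lambda$; the inaccessibility of $\lambda$ is exactly what keeps this subforcing below $\lambda$. Second, the high homogeneity of the branch-adding forcing, a large automorphism group acting on the $\lambda$ index coordinates and fixing any tree named at an earlier stage, lets me transport a single branch lying outside the capturing model into $\lambda$-many pairwise distinct branches of $T$. Thus either every branch of $T$ already lives in the small intermediate model, or $T$ has the full $\lambda=2^{\aleph_1}$ branches. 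In the former case a further reflection, using that $\aleph_2$ arose by collapsing the inaccessible $\kappa$, is what must bound the number of branches by $\aleph_1$ and rule out intermediate counts; this is the delicate point and the place where committing to a specific mechanism would be premature.

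Finally, since every Kurepa tree has more than $\aleph_1$ branches by definition, the gap forces each such tree to have exactly $2^{\aleph_1}$ branches, which gives the ``in particular'' clause. The $CH$ versus $\neg CH$ dichotomy would be handled by choosing the preliminary step so that $2^{\aleph_0}$ is set to the desired value without disturbing the inaccessibility of $\lambda$. As is typical for results of this kind, the genuine labor is not a single idea but the bookkeeping that guarantees no intervening stage reintroduces an intermediate branch count, and that labor is carried out in \cite{JinKurepaThick}.
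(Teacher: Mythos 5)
First, a point of context: the paper does not prove this statement at all --- it is quoted verbatim from Jin's paper \cite{JinKurepaThick}, so there is no internal proof to compare against. Your proposal, read as a proof attempt rather than a citation, reproduces the correct outer architecture of Jin's argument (Levy-collapse the first inaccessible $\kappa$ to $\aleph_2$, force a thick tree so that $2^{\aleph_1}$ becomes the second inaccessible $\lambda$, then run a capture-plus-homogeneity dichotomy on an arbitrary $\omega_1$-tree $T$). But it contains a genuine and self-acknowledged gap at exactly the point where the theorem lives: the case in which every branch of $T$ lies in the capturing intermediate model. You write that in this case ``a further reflection \dots{} must bound the number of branches by $\aleph_1$'' and that ``committing to a specific mechanism would be premature.'' That mechanism is not optional bookkeeping; without it nothing rules out that $T$ has, say, exactly $\aleph_2$ branches, all visible in the intermediate model --- a cardinal squarely inside the forbidden interval $(\aleph_1, 2^{\aleph_1})$. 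The homogeneity half of your dichotomy (one fresh branch spreads to $\lambda$ many) is the routine half; the entire weight of the theorem rests on the half you defer.

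The missing argument also reveals that your accounting of the two inaccessibles is off. You attribute the capture below $\lambda$ to the inaccessibility of $\lambda$; in fact the decisive use of large-cardinal structure is at $\kappa$, in three linked steps: (a) by the $\kappa$-chain condition of the collapse (this is where inaccessibility of $\kappa$ enters, via the $\Delta$-system argument), the tree $T$ and its name are captured using only a \emph{bounded} initial piece $\mathrm{Coll}(\omega_1,{<}\beta)$, $\beta<\kappa$, of the collapse together with fewer than $\kappa$ tree-coordinates --- crucially leaving a nontrivial tail of the Levy collapse still to act; (b) since the ground model satisfies GCH and the capturing forcing has size $<\kappa$, the intermediate model computes $2^{\aleph_1}<\kappa$, so in the captured case $T$ has fewer than $\kappa$ branches there; (c) the remaining collapse $\mathrm{Coll}(\omega_1,[\beta,\kappa))$ collapses that cardinal to $\aleph_1$, so $T$ ends up with at most $\aleph_1$ branches in the final model. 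If the capture were allowed to consume the whole collapse (as your sketch permits), step (c) evaporates and the proof cannot conclude. One further caution: in step (c) one must also know that the remaining forcing adds no new branches to $T$; this is where the countable levels of $T$ are used (a forcing that is countably closed over the intermediate model cannot add a cofinal branch to a tree with countable levels, by the usual $2^{\aleph_0}$-splitting argument), and it is precisely the failure of countable closure for the quotient of the tree forcing on its own generic tree that lets the thick Kurepa tree escape this trap. None of this is in your proposal, so as it stands the attempt is an outline of the statement's shape rather than a proof.
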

It follows from the proof of Theorem \ref{thickkurepa} that if $\lambda$ is regular cardinal
above $\aleph_2$, we can force the size of $2^{\aleph_1}$ to equal $\lambda$.

\begin{corollary}\label{psistat}
 There exists an \lomegaone-sentence $\psi$ such that it is consistent with ZFC that
\begin{enumerate}
 \item  $\psi$ characterizes $2^{\aleph_0}$;
 \item CH (or $\neg CH$ ), $2^{\aleph_1}$ is a regular cardinal greater than $\aleph_2$ and $\psi$ characterizes
$2^{\aleph_1}$;
 \item $2^{\aleph_0}<\aleph_{\omega_1}$ and $\psi$ characterizes $\aleph_{\omega_1}$; and
  \item $2^{\aleph_0}<2^{\aleph_1}$, $2^{\aleph_1}$ is weakly inaccessible and $Spec(\psi)=[\aleph_0,2^{\aleph_1})$.
 \end{enumerate}
 For the same $\psi$ it is consistent with ZFC that
 \begin{enumerate}
 \setcounter{enumi}{4}
  \item \mmspecpsi=$\{\aleph_1,\continuum\}$
    \item \mmspecpsi=$\{\aleph_1,\continuum,2^{\aleph_1}\}$
    \item $2^{\aleph_0}<2^{\aleph_1}$, $2^{\aleph_1}$ is weakly inaccessible and
\mmspecpsi is a cofinal subset of $[\aleph_1,2^{\aleph_1})$.
 \end{enumerate}
\end{corollary}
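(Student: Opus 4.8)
The statement is a corollary, so my plan is purely to assemble the structural description of \specpsi and \mmspecpsi coming from Theorem~\ref{spectrum} and Corollary~\ref{spectrumcor} with three external consistency inputs, and then to read off each of the seven clauses by checking which branch of the trichotomy ``$\B$ a maximum / $\B$ not a maximum / no Kurepa trees'' the chosen model falls into. The three inputs are: the consistency of ``there are no Kurepa trees'' (from an inaccessible); Jin's model of Theorem~\ref{thickkurepa}; and the authors' own models from Theorem~\ref{constthrm}, whose proofs are deferred to Section~\ref{Consistency} and which I treat here as given.

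First I would dispose of (1) and (5) together by working in a model with no Kurepa trees: Corollary~\ref{spectrumcor}(1) immediately yields $\specpsi=[\aleph_0,\continuum]$ (so $\psi$ characterizes $\continuum$) and \mmspecpsi equals $\{\aleph_1,\continuum\}$. Clauses (2) and (6) come from Jin's model, where there is a Kurepa tree with exactly \twoalephone\ branches and none with fewer, and (by the remark after Theorem~\ref{thickkurepa}) \twoalephone\ may be taken regular and above $\aleph_2$, with CH or $\neg$CH as desired. Then $\B=\twoalephone$ is a maximum, so Corollary~\ref{spectrumcor}(2) gives that $\psi$ characterizes $\max\{\continuum,\twoalephone\}=\twoalephone$, which is (2); and since the only Kurepa-branch count is \twoalephone, Theorem~\ref{spectrum}(4)--(6) puts maximal models exactly at $\aleph_1$, $\continuum$, and \twoalephone, which is (6) (choosing $\neg$CH with $\continuum<\twoalephone$ makes the three values distinct). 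Clause (3) is Theorem~\ref{constthrm}(i): there $\B=\alephomegaone$ is a maximum with $\continuum<\alephomegaone$, so Corollary~\ref{spectrumcor}(2) gives characterization of $\max\{\continuum,\alephomegaone\}=\alephomegaone$.

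For (4) and (7) I would use Theorem~\ref{constthrm}(iii), where $\continuum<\B=\twoalephone$, \twoalephone\ is weakly inaccessible, every $\kappa$ with $\aleph_2\le\kappa<\twoalephone$ carries a Kurepa tree with exactly $\kappa$ branches, but \twoalephone\ does not. Since $\B$ is not a maximum and $[\aleph_0,\B)$ dominates $[\aleph_0,\continuum]$, Corollary~\ref{spectrumcor}(3) gives $\specpsi=[\aleph_0,\twoalephone)$, which is (4). For (7) the cofinality clause of Corollary~\ref{spectrumcor}(3) is too weak, so I would instead apply Theorem~\ref{spectrum}(5)--(6) directly: this places a maximal model at $\aleph_1$, at $\continuum$, and at each $\kappa\in[\aleph_2,\twoalephone)$, and at nothing else. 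The one genuine computation is that this union telescopes to $[\aleph_1,\twoalephone)$; here I would note that the hypothesis $\continuum<\twoalephone$ (with \twoalephone\ a limit cardinal by weak inaccessibility) forces $\continuum$ to be absorbed into $[\aleph_2,\twoalephone)$ or to equal $\aleph_1$, so that $\{\aleph_1,\continuum\}\cup[\aleph_2,\twoalephone)=[\aleph_1,\twoalephone)$.

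The point I expect to be delicate is not in this corollary at all: all the mathematical weight has been pushed into the consistency inputs, above all Theorem~\ref{constthrm} (Section~\ref{Consistency}) and the large-cardinal strength underlying Jin's model and the destruction of all Kurepa trees. Within the corollary the only thing that can go wrong is a bookkeeping mismatch --- feeding a model into the wrong branch of the trichotomy, or mis-handling the boundary cardinals $\aleph_1$, $\aleph_2$, $\continuum$ --- so the steps I would check most carefully are exactly the interval arithmetic in (7) and the inequalities $\continuum<\twoalephone$ (resp.\ $\continuum<\alephomegaone$) that guarantee (3), (4), (7) come out as the advertised right-closed or right-open intervals rather than collapsing back to $[\aleph_0,\continuum]$.
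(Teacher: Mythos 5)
Your proposal is correct and follows essentially the same route as the paper: the paper's proof likewise cites Theorem~\ref{spectrum} and Corollary~\ref{spectrumcor}, feeding in a model with no Kurepa trees for (1) and (5), Jin's Theorem~\ref{thickkurepa} for (2) and (6), and Theorem~\ref{constthrm} cases (i) and (iii) for (3) and for (4), (7) respectively. Your extra care in (7) --- bypassing the cofinality clause of Corollary~\ref{spectrumcor}(3) and invoking Theorem~\ref{spectrum}(5)--(6) directly to get the full interval $[\aleph_1,2^{\aleph_1})$ --- is exactly the detail the paper leaves implicit.
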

\begin{proof}
 The result follows from Theorem \ref{spectrum} and Corollary \ref{spectrumcor} using the appropriate model of ZFC each time. For
(1) consider a model with no Kurepa trees, or a model where $\B<\continuum$, e.g. case (ii) of Theorem $\ref{constthrm}$.
For (2) and (6) use Theorem \ref{thickkurepa}. For (3),(4), use Theorem \ref{constthrm} cases (i),(iii) respectively. For (5)
consider a model of ZFC with no Kurepa trees.  For (7) use Theorem \ref{constthrm}
case (iii) again.
\end{proof}

\begin{corollary}\label{shelahconj} It is consistent with ZFC that
$\continuum<\aleph_{\omega_1}<2^{\aleph_1}$ and there exists an $\lomegaone$-sentence with
models in $\aleph_{\omega_1}$, but no models in $\twoalephone$.
\end{corollary}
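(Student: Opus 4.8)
The plan is to use the sentence $\psi$ itself, evaluated in the model of ZFC produced by case (i) of Theorem \ref{constthrm}. First I would fix a model of ZFC in which $\continuum<\alephomegaone=\B<\twoalephone$ and in which there is a Kurepa tree of size $\alephomegaone$, so that $\B$ is attained as a maximum; this is precisely the content of Theorem \ref{constthrm}(i), whose forcing construction I am free to assume. In such a model the required cardinal inequality $\continuum<\alephomegaone<\twoalephone$ holds outright, so it remains only to identify the witnessing $\lomegaone$-sentence and compute its spectrum.

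Next I would read off $\specpsi$ from the results already established. Since $\B=\alephomegaone$ is a maximum and $\B>\continuum$, Corollary \ref{spectrumcor}(2) applies and shows that $\psi$ characterizes $\max\{\continuum,\B\}=\alephomegaone$; that is, $\specpsi=[\aleph_0,\alephomegaone]$. In particular $\psi$ has a model of size $\alephomegaone$, giving the first half of the conclusion.

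Finally, because $\alephomegaone<\twoalephone$ in this model, the cardinal $\twoalephone$ lies strictly above $\sup\specpsi=\alephomegaone$, and hence $\twoalephone\notin\specpsi$. Thus $\psi$ has no model of size $\twoalephone$, which completes the argument with $\psi$ as the desired $\lomegaone$-sentence.

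I do not anticipate any real obstacle at this level, since all the genuine difficulty is absorbed into Theorem \ref{constthrm}(i), which builds the ground model with the correct combination of cardinal arithmetic and Kurepa-tree behaviour. The only point that needs a moment of care is checking that $\twoalephone$ sits strictly \emph{above} the top of the spectrum rather than coinciding with it; this is immediate from the strict inequality $\B=\alephomegaone<\twoalephone$ guaranteed by that theorem, and it is exactly this strictness that makes $\twoalephone$ fall outside $[\aleph_0,\alephomegaone]$.
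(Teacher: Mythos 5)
Your proposal is correct and is essentially the paper's own argument: the corollary is stated without explicit proof precisely because it follows immediately from Theorem \ref{constthrm}(i) (equivalently, Corollary \ref{psistat}(3)) together with Corollary \ref{spectrumcor}(2), which gives $Spec(\psi)=[\aleph_0,\aleph_{\omega_1}]$ in that model, so the strict inequality $\aleph_{\omega_1}<2^{\aleph_1}$ places $2^{\aleph_1}$ outside the spectrum. Your only additional care point, that $2^{\aleph_1}$ lies strictly above the top of the spectrum, is exactly the right thing to check and is guaranteed by the cited consistency result.
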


\vspace{6pt}
\subsection{Amalgamation and Joint Embedding Spectra}~

In this section we provide the amalgamation and joint embedding spectrum of models of $\psi$.

The following characterizes JEP-$Spec(\psi)$ and AP-$Spec(\psi)$.
\begin{theorem} \label{APSpectrum}
\begin{enumerate}
 \item $(\bk,\subm)$ fails JEP in all cardinals;
 \item $(\bk,\subm)$ satisfies AP for all uncountable cardinals that belong to $\specpsi$, but fails AP in
$\aleph_0$.
\end{enumerate}
\end{theorem}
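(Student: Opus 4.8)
The plan is to read both parts off the structural description of $\subm$ supplied by Observation \ref{obs1} and Corollary \ref{initseg}: the decisive fact is that a $\subm$-embedding freezes every \emph{non-maximal} level of the smaller model, while at the top level it may only add branches.

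For part (1) I would isolate a crude $\subm$-invariant. If $f\colon M\to N$ is a $\subm$-embedding, then $f$ carries $L^M$ onto an initial segment of $L^N$ (Observation \ref{obs1}(1)), so $f(\min L^M)=\min L^N$; and since for a genuine tree the bottom level is non-maximal, Observation \ref{obs1}(2) gives $f[V^M(\min L^M,\cdot)]=V^N(\min L^N,\cdot)$. Thus the cardinality of the bottom level is preserved by every $\subm$-embedding. To refute JEP at a cardinal $\kappa\in\specpsi$ it then suffices to exhibit two models of size $\kappa$ of height $\ge 2$ whose bottom levels have different cardinalities (say one with a single bottom node and one with two): no $N$ can receive both, since $V^N(\min L^N,\cdot)$ cannot have two cardinalities at once. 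Producing such a pair for every $\kappa\in\specpsi$ is routine, as one can adjoin an extra node at the bottom of any given model without changing its size.

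For the positive half of part (2), fix an uncountable $\kappa\in\specpsi$ and models of size $\kappa$ with $M_0\subm M_1$ and $M_0\subm M_2$. Because $M_0$ is uncountable, Corollary \ref{initseg}(2) forbids any strict end-extension of $L^{M_0}$, so $L^{M_0}=L^{M_1}=L^{M_2}$, and by Corollary \ref{initseg}(1) each $M_i$ differs from $M_0$ only in the maximal branches it realises. Hence $M_0,M_1,M_2$ share a single skeleton $S$ (all non-top levels together with the tree order), and $M_i$ merely adds to $M_0$ a set of cofinal branches of $S$. The amalgam is forced: let $N$ consist of $S$ together with the union of the two branch-sets, the embeddings acting as the identity on $S$ and on the respective branches. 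Since $L^{M_0}$ is uncountable and $\aleph_1$-like its top level is a limit, so by the limit clause of the tree axioms a branch is determined by its path through $S$; this makes the union well-defined and makes $N$ a legitimate model of $\psi$. Counting, $|N|\le|S|+\kappa=\kappa$, while $M_1\subm N$ forces $|N|=\kappa$, so $N$ witnesses $\kappa$-amalgamation.

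For the failure at $\aleph_0$ I would build an explicit countable span using an end-extension. Let $M_0$ have $L$ of order type $\omega+1$ carrying a single cofinal branch $b$ at the top (limit) level $\omega$. Extend $L$ by two levels $\omega+1<\omega+2$ to get $M_1$, in which $b$ becomes internal with exactly \emph{one} successor at level $\omega+1$, and likewise get $M_2$ with exactly \emph{two} successors of $b$ at level $\omega+1$; in both, level $\omega+2$ carries branches, so level $\omega+1$ is non-maximal. One checks $M_0\subm M_1$ and $M_0\subm M_2$ directly from Observation \ref{obs1}. If $N$ amalgamated this span over $M_0$, then since level $\omega$ lies in $M_0$ both embeddings agree there, and as $\omega+1$ is the (unique) successor of $\omega$ in each $L^{M_i}$, both embeddings send level $\omega+1$ to the same level $\ell$ of $L^N$. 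But $\ell$ is non-maximal in $N$, so Observation \ref{obs1}(2) forces $V^N(\ell,\cdot)$ to equal the images of $V^{M_1}(\omega+1,\cdot)$ and of $V^{M_2}(\omega+1,\cdot)$ at once, i.e.\ to have cardinality both $1$ and $2$ — a contradiction. Hence no amalgam exists and $\aleph_0\notin$ \apspecpsi. The main obstacle is the positive direction: verifying that the branch-union $N$ is genuinely a model of $\psi$ of size exactly $\kappa$ and that the two maps agree on $M_0$; the delicate point is identifying branches occurring in both $M_1$ and $M_2$, which is precisely where one invokes that the top level is a limit and branches are extensionally determined by their paths — everything else is bookkeeping licensed by Corollary \ref{initseg}.
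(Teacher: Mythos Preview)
Your argument is correct, and for the positive amalgamation clause it is essentially the paper's proof with the helpful extra remark that the top level of an uncountable $\aleph_1$-like order must be a limit, which is exactly what licenses identifying branches by their paths. Where you diverge from the paper is in the two negative clauses: the paper obstructs JEP (and countable AP) via the order type of $L$ --- choosing $L^{M_1},L^{M_2}$ so that neither is an initial segment of the other (respectively, a $V$-shaped triple of orders that cannot be amalgamated) --- whereas you obstruct via the cardinality of a fixed non-maximal level, invoking Observation~\ref{obs1}(2) rather than~(1). Both invariants work; yours is arguably more transparent when JEP/AP are read with genuine $\bk$-embeddings rather than literal inclusions, since ``neither order is an initial segment of the other'' is not obviously embedding-invariant without further comment, while ``size of the bottom level'' plainly is. One small point to tidy: ``adjoin an extra node at the bottom'' is glib --- a bare new root with no descendants would violate prunedness --- so you should phrase the construction as adjoining an entire new cofinal branch rooted at a fresh bottom node (and adjusting $G$ accordingly), which still has size $\le\aleph_1\le\kappa$ and therefore does not change the cardinality of the model.
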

\begin{proof} The first observation is that in all cardinalities there exists two linear orders
$L^M$, $L^N$ none of which is an initial segment of the other. By Lemma \ref{obs1}(1), $M,N$
can not be be jointly embedded to some larger structure in $\bk$. So, JEP fails in all
cardinals.

A similar argument to JEP proves that there exist three countable linear orders
$L^{M_0}$, $L^{M_1}$, $L^{M_2}$ such that $L^{M_0}$ is an initial segment of both $L^{M_1}$ and
$L^{M_2}$, and the triple $(L^{M_0},L^{M_1},L^{M_2})$ can not be amalgamated. This proves that
amalgamation fails in $\aleph_0$.

Now, assume that $M,N$ are uncountable models of $\psi$ and $M\subm N$. By Corollary
\ref{initseg}, either $L^M=L^N$ or $L^N$ is a one-point extension of $L^N$. In either case $M,N$ agree on all 
non-maximal levels, but $N$ may contain more maximal branches. We use this observation to prove amalgamation.

Let $M_0,M_1,M_2$ be uncountable models in $\bk$ with $M_0\subm M_1,M_2$. Define the amalgam $N$ of $(M_0,M_1,M_2)$ to be the 
union of $M_0$ together with all maximal branches in $M_1$
and $M_2$ (if any). If two maximal branches have exactly the same predecessors, we identify them. It follows
that $N$ is a structure in $\bk$ and $M_i\subm N$, $i=1,2,3$.
\end{proof}
Notice that, in general, the amalgamation is not disjoint, since both $M_1$ and $M_2$ may contain
the same maximal branch.

\begin{corollary}\label{Cor:APSpectra}
The following are consistent:
\begin{enumerate}
 \item AP-$Spec(\psi)=[\aleph_1,\continuum]$;
 \item AP-$Spec(\psi)=[\aleph_1,\twoalephone]$;
 \item $\continuum<\alephomegaone$ and AP-$Spec(\psi)=[\aleph_1,\aleph_{\omega_1}]$; and
 \item $2^{\aleph_0}<2^{\aleph_1}$, $2^{\aleph_1}$ is weakly inaccessible and   AP-$Spec(\psi)=[\aleph_1,2^{\aleph_1})$.
\end{enumerate}
\end{corollary}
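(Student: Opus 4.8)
The plan is to reduce the entire corollary to the spectrum computations already in hand, exploiting the fact that Theorem \ref{APSpectrum} pins down the amalgamation spectrum exactly in terms of \specpsi. Indeed, part (2) of that theorem states that $(\bk,\subm)$ amalgamates at every uncountable cardinal lying in \specpsi\ and fails to amalgamate at $\aleph_0$. Hence, in any model of ZFC, we have the clean identity
\[
\text{AP-}\specpsi = \specpsi \cap [\aleph_1,\infty),
\]
i.e. the amalgamation spectrum is obtained from the full spectrum simply by deleting $\aleph_0$. So each of the four consistency statements will follow once I exhibit a model of ZFC in which \specpsi\ has the prescribed shape $[\aleph_0,\kappa]$ or $[\aleph_0,\kappa)$, and then intersect with $[\aleph_1,\infty)$.

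Next I would dispatch the four cases by quoting the relevant spectrum results. For (1), I take a model with no Kurepa trees (equivalently, one in which $\B<\continuum$, such as case (ii) of Theorem \ref{constthrm}); there $\specpsi=[\aleph_0,\continuum]$ by Corollary \ref{spectrumcor}, so the amalgamation spectrum is $[\aleph_1,\continuum]$. For (2), I invoke Jin's model (Theorem \ref{thickkurepa}) as packaged in Corollary \ref{psistat}(2): there $\B=\twoalephone$ is a maximum with $\twoalephone$ regular above $\aleph_2$, so $\psi$ characterizes \twoalephone\ and $\specpsi=[\aleph_0,\twoalephone]$, giving $[\aleph_1,\twoalephone]$. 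For (3), I use case (i) of Theorem \ref{constthrm} (via Corollary \ref{psistat}(3)), where $\continuum<\alephomegaone=\B$ is a maximum, so $\psi$ characterizes \alephomegaone\ and the amalgamation spectrum is $[\aleph_1,\alephomegaone]$. For (4), I use case (iii) of Theorem \ref{constthrm} (via Corollary \ref{psistat}(4)): there $\continuum<\B=\twoalephone$, \twoalephone\ is weakly inaccessible, and $\B$ is not a maximum, whence $\specpsi=[\aleph_0,\twoalephone)$ and the amalgamation spectrum is the right-open interval $[\aleph_1,\twoalephone)$.

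The genuine difficulty is not internal to this corollary but lives in the cited inputs: the forcing constructions behind Theorem \ref{constthrm} and Jin's consistency result Theorem \ref{thickkurepa}, which I am entitled to assume. Within the corollary itself the only point deserving care is the right-open case (4), where I must check both that amalgamation holds at \emph{every} uncountable $\kappa<\twoalephone$ and that it fails to be witnessed at \twoalephone\ itself. The former is immediate from Theorem \ref{APSpectrum}(2), since every such $\kappa$ belongs to $\specpsi=[\aleph_0,\twoalephone)$; the latter holds because $\B$ is not a maximum, so $\twoalephone\notin\specpsi$ and hence $\twoalephone\notin\text{AP-}\specpsi$. This is precisely where the distinction between a right-closed and a right-open spectrum is inherited by the amalgamation spectrum, and it is the feature that makes (4) qualitatively different from (1)--(3).
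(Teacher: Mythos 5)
Your proposal is correct and takes essentially the same route as the paper, whose proof is precisely the one-line observation that the result follows from Theorem \ref{APSpectrum} combined with Corollary \ref{psistat}; you have simply spelled out the identity $\text{AP-}Spec(\psi)=Spec(\psi)\cap[\aleph_1,\infty)$ and matched each case to the corresponding consistency result (no Kurepa trees or $\B<\continuum$ for (1), Jin's model for (2), and Theorem \ref{constthrm}(i),(iii) for (3),(4)). Your extra care in case (4), noting that $\twoalephone\notin Spec(\psi)$ because $\B$ is not a maximum there, is exactly the point implicit in the paper's citation of Corollary \ref{psistat}(4).
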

\begin{proof}
 The result follows from Theorem \ref{APSpectrum} and Corollary \ref{psistat}.
\end{proof}

It follows from Corollary \ref{Cor:APSpectra} that if $\aleph_2\le\kappa\le\twoalephone$ is a regular cardinal, then the
$\kappa$-amalgamation property for $\lomegaone$-sentences is not absolute for models of ZFC. The result is useful especially
under the failure of  GCH, since GCH implies that $\twoalephone=\aleph_2$. 

In addition, it is an easy application of Shoenfield's absoluteness that $\aleph_0$-amalgamation is an absolute property
for models of ZFC. The question for $\aleph_1$-amalgamation remains open. Recall that by
\cite{FriedmanEtAllNonAbsolutenessOfModelExistence}, model-existence in $\aleph_1$ for $\lomegaone$-sentences is
an absolute property for models of ZFC and by \cite{MilovichSModelExistence}, if we assume GCH, $\aleph_\alpha$-amalgamation is 
non-absolute for $2\le\alpha<\omega_1$. 
\begin{openq}~
Is $\aleph_1$-amalgamation  for $\lomegaone$-sentences absolute for models of ZFC?
\end{openq}

\section{Consistency Results}\label{Consistency}

In this section we prove the consistency results announced by Theorem \ref{constthrm}. Recall that $\B$ is the supremum of
$\{\lambda|\text{there exists a Kurepa tree with $\lambda$-many branches}\}$.

\begin{theorem}\label{constpart1} It is consistent with ZFC that:
\begin{enumerate}
\item $2^{\aleph_0}<\aleph_{\omega_1}=\B<2^{\aleph_1}$ and there  exists a Kurepa tree of size $\alephs{\omega_1}$.
\item $\aleph_{\omega_1}=\B<2^{\aleph_0}$ and there  exists a Kurepa tree of size $\alephs{\omega_1}$.
\end{enumerate}
\end{theorem}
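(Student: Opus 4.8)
The plan is to reduce both parts to the single task of arranging $\B=\alephomegaone$ as a supremum, and then let the general theory supply the rest. First I would record two ``for free'' facts. Since $\operatorname{cf}(\twoalephone)>\aleph_1$ by König's theorem while $\operatorname{cf}(\alephomegaone)=\aleph_1$, any model with $\twoalephone\ge\alephomegaone$ automatically satisfies $\twoalephone>\alephomegaone$; thus the strict inequality $\B<\twoalephone$ in part (1) comes for free once $\B=\alephomegaone$ and $\twoalephone\ge\alephomegaone$. More importantly, Lemma \ref{bknotmax} with $\kappa=\aleph_1$ says that if $\B$ is not a maximum then $\operatorname{cf}(\B)\ge\aleph_2$; since $\operatorname{cf}(\alephomegaone)=\aleph_1<\aleph_2$, the equality $\B=\alephomegaone$ forces $\B$ to be a maximum, i.e.\ it produces a Kurepa tree with exactly $\alephomegaone$ branches. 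So I never have to build a tree with exactly $\alephomegaone$ branches by hand; it suffices to pin the supremum.

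Next, the forcing. Over a ground model of GCH I would use the $\sigma$-closed poset $\mathbb{K}$ whose conditions are pairs $(t,b)$, where $t$ is a countable normal tree of successor height $<\omegaone$ with countable levels and $b$ is a countable partial function from $\alephomegaone$ into the top level of $t$, ordered by end-extension in the tree coordinate together with extension of the labelled nodes. Being $\sigma$-closed, $\mathbb{K}$ adds no reals, so $\continuum=\aleph_1$ in the extension (exactly what part (1) needs), and it preserves $\omegaone$. Under GCH a $\Delta$-system argument on the countable domains of the $b$'s, after fixing the countably coded tree part and the values on the kernel, shows $\mathbb{K}$ is $\aleph_2$-c.c., so all cardinals are preserved. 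A standard density argument separates the generic branches $b_\xi$ ($\xi<\alephomegaone$), so the generic tree $T$ is a Kurepa tree with at least $\alephomegaone$ cofinal branches, giving $\B\ge\alephomegaone$. Counting nice names (the poset has size $\alephomegaone$, since $\alephomegaone^{\aleph_0}=\alephomegaone$, and it is $\aleph_2$-c.c.) yields $\twoalephone\le\alephomegaone^{\aleph_1}=\aleph_{\omega_1+1}$, and the $\alephomegaone$-many distinct branches force equality, so $\twoalephone=\aleph_{\omega_1+1}$.

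The real content, and what I expect to be the main obstacle, is the reverse inequality $\B\le\alephomegaone$: I must show that no Kurepa tree of the extension has more than $\alephomegaone$ cofinal branches. Because $\twoalephone=\aleph_{\omega_1+1}$ is the successor of $\alephomegaone$, this is equivalent to ruling out a tree with the maximal possible number $\twoalephone$ of branches. The naive approach---capture each tree by an $\aleph_1$-sized piece of $\mathbb{K}$ via the $\aleph_2$-c.c., capture its branches, and count---fails precisely here: a family of $\aleph_{\omega_1+1}$ names with $\aleph_1$-sized supports is exactly the case where the $\Delta$-system lemma breaks down (since $\alephomegaone^{\aleph_1}=\aleph_{\omega_1+1}$), which is the combinatorial shadow of $\operatorname{cf}(\alephomegaone)=\aleph_1$. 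So the count has to be organized more cleverly. I would try to prove that the generic tree $T$ is \emph{universal}, in the sense that the branches of every Kurepa tree of the extension embed into the $\alephomegaone$ labelled branches of $T$---equivalently, that every cofinal branch of every tree is read off from a single coordinate of the generic. This caps all branch sets at $\alephomegaone$ and is the step that must be carried out by hand with a fusion/sealing argument tailored to $\mathbb{K}$. Granting $\B\le\alephomegaone$, we obtain $\B=\alephomegaone$, Lemma \ref{bknotmax} upgrades this to a maximum, and part (1) is complete.

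Finally, for part (2) I would keep the same mechanism but additionally make the continuum large. The cleanest route is to compose $\mathbb{K}$ with a Cohen forcing $\operatorname{Add}(\omega,\theta)$ for some $\theta>\alephomegaone$, so that $\continuum=\theta>\alephomegaone=\B$. The point to check is that this ccc step does not disturb $\B$: since $\operatorname{Add}(\omega,\theta)$ is Knaster it adds no new cofinal branch to any $\omegaone$-tree, so the trees already present keep their branch counts and in particular the witnessing tree of size $\alephomegaone$ survives. One must still verify that the Cohen extension creates no new Kurepa tree with more than $\alephomegaone$ branches, which I would handle by the same capturing/universality analysis as in part (1), relativized to the ccc names. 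This is the only genuinely new verification for part (2); the cardinal arithmetic ($\continuum=\theta$, $\B=\alephomegaone$) and the appeal to Lemma \ref{bknotmax} are as before.
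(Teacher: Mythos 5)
Your setup is sound and matches the paper's: you use the same $\sigma$-closed, $\aleph_2$-cc poset of pairs $(t,f)$ over a GCH ground model, the same K\"onig/cofinality observation to get $\B<\twoalephone$ for free, and the same Cohen-forcing follow-up $Add(\omega,\theta)$ with the Knaster argument for part (2). Your use of Lemma \ref{bknotmax} to convert $\B=\alephomegaone$ into ``$\B$ is a maximum'' is a nice (correct) shortcut, although here it is not even needed, since the generic tree itself witnesses the maximum. But there is a genuine gap, and it sits exactly where you flagged it: the inequality $\B\le\alephomegaone$ is the entire technical content of the theorem, and you do not prove it. Saying ``I would try to prove that the generic tree $T$ is universal\dots this is the step that must be carried out by hand with a fusion/sealing argument'' is a placeholder, not an argument; everything in the paper's proof (Lemma \ref{submodel}, the Knaster branch-preservation lemma, Claim \ref{claim1}, and the final pigeonhole lemma) exists precisely to discharge this step.

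Moreover, the statement you propose to prove is not the right one. It is false that the branches of every Kurepa tree of the extension embed into the branches of the generic tree $T$: there are Kurepa trees living in intermediate extensions $V_0[H_\alpha]$ (or, in part (2), in $V_0[H_\alpha\times\bar G]$ for an $\omega_1$-sized piece $\bar G$ of the Cohen forcing) whose branches have nothing to do with the generic coordinates of $T$. The correct statement, and the one the paper proves, is a capturing claim: any Kurepa tree $T'$ of the final model lies in such an intermediate model, its cofinal branches in the full extension are either already in that intermediate model or are ``read off a single new coordinate'' of the remaining forcing (this is the quotient-name analysis of Claim \ref{claim1}, where a new branch is shown to induce a generic filter $K$ for the one-extra-coordinate poset $\mathbb{Q}$), so each branch lies in some $V_0[H_\alpha\times\bar G]$ with $\alpha<\omega_1$. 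The bound $\B\le\alephomegaone$ then follows not from universality but from a pigeonhole-plus-GCH count: if some tree had $\lambda>\alephomegaone$ branches, then since $\mathrm{cf}(\lambda)>\omega_1$, $\lambda$-many of them would lie in a single $V_0[H_\alpha\times\bar G]$, forcing $2^{\omega_1}>\alephomegaone$ in that small extension, contradicting GCH in $V_0$. Without this capturing-and-counting argument (or a worked-out substitute), your proposal establishes only $\B\ge\alephomegaone$ and the cardinal arithmetic, not the theorem.
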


We start with a model $V_0$ of ZFC+GCH.  Let $\P$ be the standard $\sigma$-closed poset for adding a Kurepa tree $K$
with $\aleph_{\omega_1}$-many $\omega_1$-branches. More precisely, conditions in $\P$ are of the form $(t,f)$, where:
\begin{itemize}
\item
$t$ is a tree of height $\beta+1$ for some $\beta<\omega_1$ and countable levels;
\item
$f$ is a function with $\dom(f)\subset \aleph_{\omega_1}$, $|\dom(f)|=\omega$, and $\ran(f)= t_\beta$, where $t_\beta$ is the
$\beta$-th level of $t$.
\end{itemize}
Intuitively, $t$ is an initial segment of the generically added tree, and each $f(\delta)$ determines where the $\delta$-th branch
 intersects the tree at level $\beta$.
The order is defined as follows: $(u,g)\leq (t,f)$ if,
\begin{itemize}
\item
$t$ is an initial segment of $u$,
$\dom(f)\subset\dom(g)$,
\item
for every $\delta\in\dom(f)$, either $f(\delta)=g(\delta)$ (if $t$ and $u$ have the same height) or $f(\delta)<_u g(\delta)$.
\end{itemize}
We have that $\P$ is countably closed and has the $\aleph_2$-chain condition.
Suppose that $H$ is $\P$-generic over $V_0$. Then  $\bigcup_{(t,f)\in H}t$ is a Kurepa tree with $\aleph_{\omega_1}$-many
branches,  where for $\delta<\aleph_{\omega_1}$, the $\delta$-th branch is given by $\bigcup_{(t,f)\in H,\delta\in\dom(f)}
f(\delta)$. These branches are distinct by standards density arguments. Also, note that since $|B|$ cannot exceed $2^{\aleph_1}$
and in this model $\aleph_{\omega_1}=|B|$, we have that $\aleph_{\omega_1}<2^{\aleph_1}$.

The model $V_0[H]$ proves part (1) of
Theorem \ref{constpart1}.
The same model also answers positively a question raised in
\cite{CharacterizableCardinals}. The question was whether any
cardinal outside the smallest set which contains $\aleph_0$ and which is closed under successors,
countable unions, countable products and powerset, can be characterized by an $\lomegaone$-sentence. $\alephomegaone$ is
consistently such an example.

\begin{lemma}\label{smallestset} Let $C$ be the smallest set of cardinals that contains $\aleph_0$ and is closed under
successors, countable unions, countable products and powerset. In $V_0[H]$, the set $C$ does not
contain $\aleph_{\omega_1}$.
\end{lemma}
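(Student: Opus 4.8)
The plan is not to build $C$ from the bottom up, but to exploit its minimality: I would exhibit a set of cardinals that contains $\aleph_0$, is closed under all four operations, and omits $\alephomegaone$. Concretely, let $D$ be the set of all cardinals \emph{except} $\alephomegaone$. Since $C$ is by definition the smallest set containing $\aleph_0$ and closed under successors, countable unions, countable products and powerset, it suffices to check that $D$ enjoys all four closure properties; then $C\subseteq D$, which is precisely the assertion $\alephomegaone\notin C$. So the whole argument reduces to verifying, one operation at a time, that no operation can output $\alephomegaone$ from inputs that are all different from $\alephomegaone$.

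Two of the four cases are pure ZFC facts about $\alephomegaone$ and need nothing about the forcing. Since $\omegaone$ is a limit ordinal, $\alephomegaone$ is a limit cardinal, hence not the successor of any cardinal, which handles the successor operation. Since $cf(\alephomegaone)=\omegaone>\omega$ (both $\omegaone$ and cofinalities are preserved because $\P$ is countably closed with the $\aleph_2$-chain condition), no countable supremum can equal $\alephomegaone$: if all inputs lie below $\alephomegaone$ their supremum stays below it by uncountable cofinality, and if some input is $\ge\alephs{\omega_1+1}$ the supremum is already too large. This handles countable unions.

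The remaining two cases, powerset and countable product, are where the cardinal arithmetic of $V_0[H]$ enters. For powerset I would use the continuum function in $V_0[H]$: countable closure gives $\continuum=\aleph_1$ (no new reals), while for every $\kappa\ge\aleph_1$ monotonicity yields $2^{\kappa}\ge\twoalephone>\alephomegaone$. Thus $2^\kappa$ is either $\aleph_1$ or strictly above $\alephomegaone$, in either case $\ne\alephomegaone$. For a countable product $\prod_n\kappa_n$ with all $\kappa_n\ne\alephomegaone$: if some factor is $\ge\alephs{\omega_1+1}$ the product is too large, and otherwise every factor lies below $\alephomegaone$, so by $cf(\alephomegaone)>\omega$ their supremum is some $\mu=\aleph_\gamma$ with $\gamma<\omegaone$; then $\prod_n\kappa_n\le\mu^{\aleph_0}$, and since $\P$ adds no new $\omega$-sequences this power is computed as in $V_0$, where GCH gives $\mu^{\aleph_0}\in\{\aleph_\gamma,\aleph_{\gamma+1}\}$, a cardinal strictly below $\alephomegaone$.

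The genuinely delicate point, and the one I expect to be the main obstacle, is that $\twoalephone$ must be \emph{strictly} above $\alephomegaone$: were $\twoalephone=\alephomegaone$, the powerset operation applied to $\aleph_1$ would place $\alephomegaone$ into $C$ and the lemma would be false, so this strict inequality is exactly what makes the statement work. I would pin $\twoalephone$ down by a nice-names count: $|\P|=\alephomegaone$ (using $\alephomegaone^{\aleph_0}=\alephomegaone$, valid since $cf(\alephomegaone)>\omega$ and GCH holds in $V_0$), together with the $\aleph_2$-chain condition, gives $\twoalephone\le\alephomegaone^{\aleph_1}=\alephs{\omega_1+1}$, while the $\alephomegaone$ generic branches give $\twoalephone\ge\alephomegaone$; strictness then follows from König's theorem, since $cf(\twoalephone)>\aleph_1=cf(\alephomegaone)$ forces $\twoalephone\ne\alephomegaone$. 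With these values of the continuum function in $V_0[H]$ in hand, all four closure checks go through, so $C\subseteq D$ and $\alephomegaone\notin C$.
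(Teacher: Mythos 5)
Your proof is correct and follows essentially the same route as the paper's: the paper likewise establishes $2^{\aleph_0}<\aleph_{\omega_1}<2^{\aleph_1}$ in $V_0[H]$ (the upper inequality coming from the $\aleph_{\omega_1}$ branches of the generic tree, noted just before the lemma), observes that this disposes of successor, countable union and powerset, and then handles countable products of cardinals below $\aleph_{\omega_1}$ by the same GCH computation in $V_0$. Your write-up just makes explicit what the paper leaves implicit, namely the minimality argument via the complement set $D$ and the K\"onig's-theorem step giving strictness of $\aleph_{\omega_1}<2^{\aleph_1}$.
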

\begin{proof}
Since $2^{\omega}<\aleph_{\omega_1}<2^{\omega_1}$, it is enough to show that $\aleph_{\omega_1}$ is
not the countable product of countable cardinals. Suppose $\langle\alpha_n\mid n<\omega\rangle$ is
an increasing sequence of countable ordinals and let $\alpha=\sup_n\alpha_n+1$. Then
$\prod_n\aleph_{\alpha_n}=(\aleph_\alpha)^\omega=\aleph_{\alpha+1}$.
\end{proof}

Let $\C=Add(\omega,\aleph_{\omega_1+1})$ denote the standard poset for adding   $\aleph_{\omega_1+1}$-many Cohen reals. Suppose
$G$ is $\C$-generic over $V:=V_0[H]$. (Note that $\C$ is interpreted the same in $V_0$ and in $V$ and  that actually genericity
over $V_0$ implies genericity over $V$ by the ccc.)

We claim that the forcing extension $V_0[H\times G]=V[G]$ satisfies $\continuum>\alephomegaone=\B$.
Let $T$ be a Kurepa tree in  $V[G]$. Denote $\C_{\omega_1}:= Add(\omega, \omega_1)$, i.e. the Cohen poset
for adding $\omega_1$ many reals. The following fact is standard and can be found in \cite{JechsSetTheory}, but we give the  proof
for completeness.
\begin{lemma}\label{submodel}
There is a generic $\bar{G}$ for $\C_{\omega_1}$, such that $V[\bar{G}]\subset V[G]$ and $T\in V[\bar{G}]$.

\end{lemma}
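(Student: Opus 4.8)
The plan is to exploit the countable chain condition of $\C$ together with the fact that a Kurepa tree is a structure of size $\aleph_1$: intuitively, building $T$ only requires $\aleph_1$-many of the $\aleph_{\omega_1+1}$ Cohen coordinates, and restricting $G$ to those coordinates yields the desired $\C_{\omega_1}$-generic. Since $\C$ is ccc it preserves cardinals, so $\omega_1^V=\omega_1^{V[G]}$, and I may assume without loss of generality that the underlying set of $T$ is $\omega_1$ and that its tree order $<_T$ is a subset of $\omega_1\times\omega_1$ (if not, replace $T$ by an isomorphic copy computed in $V[G]$; such a copy has, via an isomorphism lying in $V[G]$, exactly the same branches, which is all that the later counting argument uses). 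Fixing in $V$ a pairing bijection $\omega_1\times\omega_1\cong\omega_1$, I then code $<_T$ as a single set $X\subseteq\omega_1$ belonging to $V[G]$.

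Next I would pass to a nice $\C$-name for $X$. Choose $\dot X=\bigcup_{\alpha<\omega_1}\{\check\alpha\}\times A_\alpha$ with $\dot X[G]=X$, where each $A_\alpha$ is a maximal antichain of $\C$ deciding the statement ``$\check\alpha\in\dot X$''. By the ccc each $A_\alpha$ is countable, and since $\C=Add(\omega,\aleph_{\omega_1+1})$ consists of finite conditions, every $p\in A_\alpha$ has finite support. Hence
\[
 I:=\bigcup_{\alpha<\omega_1}\ \bigcup_{p\in A_\alpha}\operatorname{supp}(p)
\]
is a union of $\aleph_1$-many countable sets, so $|I|\le\aleph_1$; enlarging $I$ if needed I arrange $|I|=\aleph_1$.

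Finally I use the canonical product factorization $\C\cong Add(\omega,I)\times Add(\omega,\aleph_{\omega_1+1}\setminus I)$. Writing $\bar G:=G\restriction I$ for the induced generic on the coordinates in $I$, the product lemma gives that $\bar G$ is $Add(\omega,I)$-generic over $V$ and that $V[\bar G]\subseteq V[G]$. Since $|I|=\aleph_1$ we have $Add(\omega,I)\cong Add(\omega,\omega_1)=\C_{\omega_1}$, so $\bar G$ is, through this isomorphism, the required $\C_{\omega_1}$-generic. Because $\dot X$ mentions only conditions with support in $I$, its evaluation depends only on $G\restriction I$; thus $X=\dot X[\bar G]\in V[\bar G]$, and decoding through the pairing bijection (which lies in $V$) recovers $<_T$, and hence $T$, inside $V[\bar G]$.

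The only genuinely delicate point is the support computation: I must ensure that $\aleph_1$-many countable antichains involve at most $\aleph_1$ coordinates, and this is exactly where the ccc is essential — without it an antichain could be of size $\aleph_{\omega_1+1}$ and the cardinality bound on $I$ would fail. Everything else (the product factorization, the genericity of the restriction $\bar G$, and the isomorphism $Add(\omega,I)\cong\C_{\omega_1}$) is the standard theory of adding Cohen reals and needs only routine verification.
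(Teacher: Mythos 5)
Your proof is correct and follows the same basic strategy as the paper's: isolate at most $\aleph_1$-many Cohen coordinates that suffice to compute $<_T$, then restrict $G$ to those coordinates using the product structure of $Add(\omega,\aleph_{\omega_1+1})$. The difference lies in how the coordinate set is produced, and it is worth noting. The paper chooses, for each pair of nodes, a single condition $p_{\alpha,\beta,m,n}\in G$ deciding $\langle\alpha,m\rangle<_{\dot T}\langle\beta,n\rangle$ and collects the finite domains; since these conditions are picked from $G$, the resulting set $d$ is a priori only in $V[G]$, so the assertion that $G\upharpoonright d$ is $\C_{\omega_1}$-generic over $V$ implicitly needs an extra step (for instance, covering $d$ by a set in $V$ of size $\aleph_1$, which the ccc provides). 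You instead extract the supports from maximal antichains chosen in $V$ (a nice name), so your set $I$ lies in $V$, and the factorization $\C\cong Add(\omega,I)\times Add(\omega,\aleph_{\omega_1+1}\setminus I)$ plus the genericity of $G\upharpoonright I$ over $V$ is literally the product lemma; this is the cleaner way to run the argument and is in effect the patch the paper's phrasing silently requires. One small correction: as written, your name $\dot X=\bigcup_{\alpha}\{\check\alpha\}\times A_\alpha$ with $A_\alpha$ a maximal antichain \emph{deciding} ``$\check\alpha\in\dot X$'' would evaluate to all of $\omega_1$, since $G$ meets every maximal antichain of $V$; you want $A_\alpha$ to be a maximal antichain among the conditions \emph{forcing} $\check\alpha\in\dot X$. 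This is the standard nice-name construction and changes nothing downstream, as your counting only uses that each $A_\alpha$ is countable (ccc) and each condition has finite support. Finally, your reduction to a tree with universe $\omega_1$ is harmless for the same reason the paper's re-indexing of nodes by pairs $\langle\alpha,n\rangle$ is: the subsequent branch-counting lemmas only require an isomorphic copy of $T$ in $V[\bar G]$, with the isomorphism available in $V[G]$.
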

\begin{proof}
Since $T$ is a tree of height $\omega_1$ and countable levels, we can index the nodes of $T$ by $\langle\alpha, n\rangle$, for
$\alpha<\omega_1$ and $n<\omega$, where the first coordinate denotes the level of the node. In particular each level
$T_\alpha=\{\alpha\}\times\omega$. Note that this is in the ground  model (although of course the relation $<_T$ may not be).
Working in $V[G]$, for every $\alpha<\beta<\omega_1$ and $n,m$, let $p_{\alpha,\beta, m,n}\in G$ decide the statement
$\langle\alpha,m\rangle<_{\dot{T}}\langle \beta, n\rangle$. Let $d_{\alpha,\beta, m,n}=\dom(p_{\alpha,\beta, m,n})$; this is a
finite  subset of $\aleph_{\omega_1+1}\times\omega$. Now let $d^*=\bigcup_{\alpha,\beta, m,n}d_{\alpha,\beta, m,n}$ and
$d=\{i<\aleph_{\omega_1+1}\mid (\exists k)\langle i,k\rangle\in d^*\}$. Then $d$ has size at most $\omega_1$. By increasing $d$
if necessary, assume that $|d|=\omega_1$.

Write $G$ as $\prod_{i\in\aleph_{\omega_1+1}}G_i$, where every $G_i$ is $Add(\omega,1)$-generic, and let $\bar{G}=\prod_{i\in
d}G_i$.  Then $\bar{G}$ is $\C_{\omega_1}$-generic, containing every $p_{\alpha,\beta, m,n}$. And so $T\in V[\bar{G}]\subset
V[G]$.

\end{proof}
Fix $\bar{G}$ as in the above lemma.
\begin{lemma} All $\omega_1$-branches of $T$ in $V[G]$ are already in the forcing extension $V[\bar{G}]$.
\end{lemma}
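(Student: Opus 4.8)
The plan is to show that adding the remaining Cohen reals (those indexed outside $d$) cannot create any new cofinal branch of $T$, so that every $\omega_1$-branch in $V[G]$ already lives in $V[\bar{G}]$. The key structural fact is that $V[G]$ is a two-step extension $V[\bar{G}][G']$, where $G'$ is the generic for the complementary Cohen forcing $\C':=Add(\omega,\aleph_{\omega_1+1}\setminus d)$ over $V[\bar{G}]$. Since $T\in V[\bar{G}]$ by Lemma \ref{submodel}, it suffices to argue that $\C'$ adds no new cofinal branches to the ground-model tree $T$.

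First I would reduce to the standard preservation lemma: a forcing that is \emph{countably closed}, or more relevantly here one that has a sufficiently strong chain condition or is itself $\sigma$-centered/ccc and ``does not add branches,'' leaves the set of cofinal branches of an $\omega_1$-tree unchanged. The relevant property of Cohen forcing $\C'$ is that it is productively ccc and, crucially, can be analyzed as $Add(\omega,\kappa)$ for $\kappa = \aleph_{\omega_1+1}$. The cleanest route is to invoke the classical fact that if $T$ is a tree of height $\omega_1$ with countable levels, then a \emph{Knaster} (or more simply, a ccc) forcing that factors as a finite-support product of countable posets adds no new cofinal branch through $T$. I would carry this out by a $\Delta$-system / fusion argument: suppose toward a contradiction that some $p\in\C'$ forces $\dot b$ to be a new cofinal branch. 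For each $\alpha<\omega_1$ pick a condition $p_\alpha\le p$ deciding the node $\dot b(\alpha)$ of $\dot b$ on level $\alpha$; the domains $\dom(p_\alpha)$ form an uncountable family of finite sets, so by the $\Delta$-system lemma we may thin to an uncountable set with a common root $r$.

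The main step is then to derive a contradiction by finding two levels $\alpha<\beta$ whose conditions $p_\alpha,p_\beta$ agree on the root $r$ and are therefore \emph{compatible}, while forcing incompatible information about the branch: their common lower bound would force that $\dot b(\beta)$ extends $\dot b(\alpha)$ in $T$, but because the branch is new and not in $V[\bar{G}]$, one can arrange (by a splitting argument inside the ccc forcing) two extensions $q_0,q_1$ of the root forcing $\dot b(\beta)$ to take distinct values $s_0\neq s_1$ both above $\dot b(\alpha)$. Since $T$ has countable levels and lives in $V[\bar{G}]$, the set of possible values of $\dot b(\beta)$ is a ground-model countable set, and a standard counting/compatibility argument shows the branch $\dot b$ would have to be decided by conditions with root $r$ alone, hence definable in $V[\bar{G}]$ — the desired contradiction.

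The hard part will be organizing the compatibility argument so that the new branch is genuinely forced to disagree with everything in $V[\bar{G}]$: one must use that $\dot b\notin V[\bar{G}]$ to produce the required splitting into $s_0\neq s_1$, and simultaneously use the ccc to keep the relevant conditions compatible modulo the root. I expect this splitting-versus-compatibility tension to be the crux. A clean way to package it is to note that $\C'$ is homogeneous and that any name for a branch is supported on countably many coordinates, so the branch is in fact added by a \emph{countable} sub-product $Add(\omega,\omega)$; but adding a single block of Cohen reals over a model containing $T$ cannot add a cofinal branch to an $\omega_1$-tree with countable levels, by the classical result that Cohen forcing is ``$\omega_1$-branch preserving'' for such trees. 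Invoking this last fact (together with the factorization $V[G]=V[\bar G][G']$ and $T\in V[\bar G]$) completes the argument, and all $\omega_1$-branches of $T$ in $V[G]$ are already in $V[\bar{G}]$.
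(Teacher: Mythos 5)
Your opening reduction is exactly the paper's: factor $V[G]=V[\bar{G}][G']$, where $G'$ is generic over $V[\bar{G}]$ for the Cohen forcing on the coordinates outside $d$, and then show that this remaining forcing adds no new cofinal branch to an $\omega_1$-tree with countable levels lying in $V[\bar{G}]$. Your $\Delta$-system setup (choose $p_\alpha\le p$ deciding $\dot{b}\cap T_\alpha=\{u_\alpha\}$ and thin to an uncountable pairwise compatible family) is precisely the proof that this quotient is Knaster, which is what the paper invokes. (Note in passing that ccc alone would not suffice: forcing with a Suslin tree is ccc and adds a cofinal branch to a tree with countable levels, so the Knaster/finite-support-product structure is genuinely needed.) So the approach is the same as the paper's; the problem is that neither of your two proposed ways of finishing actually works.

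The step you ultimately invoke to ``complete the argument'' --- that any name for a branch of $T$ is supported on countably many coordinates, so the branch is added by a countable sub-product $Add(\omega,\omega)$ --- is false. A cofinal branch is an uncountable subset of $T$; by the ccc, a nice name for it consists of one countable antichain for each of the $\aleph_1$-many nodes of $T$, so its support in general has size $\aleph_1$, not $\aleph_0$. (Compare Lemma \ref{submodel}: already to capture $T$ itself one needs $\omega_1$-many coordinates.) Knowing that every branch is captured by countably many coordinates is essentially the statement being proved, so this packaging is circular. Your direct argument's ending --- that $\dot{b}$ ``would have to be decided by conditions with root $r$ alone, hence definable in $V[\bar{G}]$'' --- is also unjustified: two arbitrary conditions agreeing on the root need not be compatible (only members of your thinned family are), and homogeneity does not help because an automorphism of the forcing moves the name $\dot{b}$. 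The correct finish, which is the content of the paper's one-line proof, stays with the compatible family: $\{u_\alpha\mid\alpha\in I\}$ is a chain, since a common extension of $p_\alpha,p_\beta$ forces both $u_\alpha,u_\beta\in\dot{b}$ and hence $u_\alpha<_T u_\beta$; its downward closure is therefore a cofinal branch $b^*\in V[\bar{G}]$. If $\dot{b}$ were forced to differ from every branch in $V[\bar{G}]$, then for each $\alpha\in I$ pick $q_\alpha\le p_\alpha$, a level $\gamma_\alpha$, and a node $v_\alpha\ne b^*(\gamma_\alpha)$ with $q_\alpha\Vdash\dot{b}(\gamma_\alpha)=v_\alpha$; one more application of Knasterness yields compatible $q_\alpha,q_\beta$ with $\beta>\gamma_\alpha$, whose common extension forces $v_\alpha<_T u_\beta=b^*(\beta)$, i.e.\ $v_\alpha=b^*(\gamma_\alpha)$, a contradiction.
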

\begin{proof}
This is because $\C/\bar{G}$, i.e. the forcing to get from $V[\bar{G}]$ to $V[G]$ is Knaster:

Suppose $\dot{b}$ is forced to be an $\omega_1$-branch of $T$. For $\alpha<\omega$ let $p_\alpha\Vdash_{\C/\bar{G}}
u_\alpha\in\dot{b}\cap T_\alpha$,  where $T_\alpha$ is the $\alpha$-th level. Then there is an unbounded $I\subset \omega_1$ such
that for all $\alpha,\beta\in I$, $p_\alpha$ and $p_\beta$ are compatible, and so $\{u_\alpha\mid\alpha\in I\}$ generate the
branch in $V[\bar{G}]$.

\end{proof}
For every $\alpha<\aleph_{\omega_1}$, let $\P_\alpha:=\{(t,f\upharpoonright{\aleph_\alpha})\mid (t,f)\in\P\}$. Then clearly the
poset $\P$ is the
union of the sequence $\langle\P_\alpha \mid\alpha<\omega_1\rangle$ and each $\P_\alpha$ is a regular $\aleph_2$-cc
subordering of $\P$  that adds $\aleph_\alpha$ many branches to the generic Kurepa tree. Let $H_\alpha$ be the generic  filter for
$\mathbb{P}_\alpha$ obtained from $H$. Also, for a condition $p\in\mathbb{P}$, we use the notation $p=(t^p, f^p)$.

\begin{claim}\label{claim1}
For every $\omega_1$ branch $b$ of $T$, there is some $\alpha<\omega_1$, such that $b\in V_0[H_\alpha\times\bar{G}]$.
\end{claim}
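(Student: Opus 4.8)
The plan is to exploit the factorization of $\P$ together with the two structural features already recorded in the excerpt: that $\P$ (and hence each quotient) is countably closed, and that $\P_\alpha$ is a regular $\aleph_2$-cc subordering adding exactly $\aleph_\alpha$ branches. Write $M_\alpha:=V_0[H_\alpha\times\bar{G}]$ and factor $\P\cong\P_\alpha\ast\dot{\mathbb{Q}}_\alpha$, where $\dot{\mathbb{Q}}_\alpha$ is the quotient adding the branches indexed in $[\aleph_\alpha,\aleph_{\omega_1})$ to the already-built generic tree $K$. Since the defining combinatorics of $\P$ (take unions of countable trees and thread finitely/countably many branches through a fresh top level) can be carried out in any model containing a given countable descending sequence, $\P$ is $<\omega_1$-closed, and this survives the ccc forcing $\bar G$; so each $\mathbb{Q}_\alpha$ is $<\omega_1$-closed over $M_\alpha$. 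The first consequence I would extract is that every countable initial segment $b\upharpoonright\xi$ of the branch lies in $M_\alpha$ for \emph{every} $\alpha$: $b\upharpoonright\xi$ is a countable sequence of ground-model nodes, and a $<\omega_1$-closed forcing adds no new countable sequences of ordinals. Thus $b$ is a \emph{fresh} $\omega_1$-branch over each $M_\alpha$, and in particular each individual value $b(\xi)$ already lives in $M_0$.

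Next I would pass to a nice name for $b$ in the full forcing $\P\times\C_{\omega_1}$ over $V_0$. As this product is $\aleph_2$-cc, each value $b(\xi)$ is decided by an antichain of size $\le\aleph_1$, so across all $\xi<\omega_1$ the name mentions at most $\aleph_1$ conditions, and the union of their (countable) branch-supports is a set $S\subseteq\aleph_{\omega_1}$ with $|S|\le\aleph_1$; consequently $b\in V_0[H_S\times\bar{G}]$. If $S$ is bounded below $\aleph_{\omega_1}$ we are immediately done, so assume $S$ is cofinal. The crucial numerical observation is that $cf(\aleph_{\omega_1})=\omega_1>\omega$: since $S$ has order type $\omega_1$ and is cofinal, each initial segment $S\cap\aleph_\alpha$ has order type $<\omega_1$ and is therefore \emph{countable}. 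Equivalently, \emph{any countable subset of $S$ is bounded below $\aleph_{\omega_1}$}, hence contained in some $\aleph_\alpha$. This reduces the Claim to showing that $b$ in fact depends on only countably many of the coordinates of $S$.

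The main step, and the one I expect to be the real obstacle, is precisely this reduction from an $\aleph_1$-sized support to a countable one for a single branch. I would present the tail forcing as a countable-support product $\prod_{\delta\in S\setminus S_\alpha}\mathbb{B}_\delta$ of branch-threading forcings through the fixed tree $K\in M_\alpha$, which is again $<\omega_1$-closed and $\aleph_2$-cc, and argue that a fresh $\omega_1$-branch cannot spread across an unbounded set of these mutually generic coordinates. Concretely, for each $\xi$ choose $p_\xi\in H_S$ deciding $b\upharpoonright\xi$ with countable support $c_\xi$; the point to force through is that freshness (all $b\upharpoonright\xi\in M_\alpha$) together with $<\omega_1$-closure lets one run a reflection/fusion argument on $\langle c_\xi\mid\xi<\omega_1\rangle$, yielding a $\Delta$-system or a branching tree of conditions whose failure to stabilize would produce an antichain of size $\aleph_2$, contradicting the $\aleph_2$-cc. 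The delicate part is that $\mathbb{Q}_\alpha$ genuinely \emph{does} add fresh branches (namely the generic branches of $K$), so the argument cannot simply assert ``no fresh branches''; it must instead show that each single fresh branch is read off countably many coordinates. Once $b$ is seen to depend on a countable set $c\subseteq S$, boundedness of $c$ gives $c\subseteq\aleph_\alpha$ for some $\alpha<\omega_1$, whence $b\in V_0[H_{S\cap\aleph_\alpha}\times\bar{G}]\subseteq M_\alpha$, completing the Claim. I would finally remark that this bounded-stage conclusion is exactly what is needed downstream: it lets one cover all $\omega_1$-branches of $T$ by the $\aleph_1$ models $M_\alpha$, each of which (by the $\aleph_2$-cc and GCH in $V_0$) contains fewer than $\aleph_{\omega_1}$ such branches, yielding the bound $\B=\aleph_{\omega_1}$.
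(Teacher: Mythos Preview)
Your approach diverges from the paper's at exactly the point you flag as ``the real obstacle,'' and the sketch you give there does not go through. You choose $p_\xi\in H_S$ deciding $b\upharpoonright\xi$ and hope that if the supports $c_\xi$ fail to stabilize inside a countable set you can extract an antichain of size $\aleph_2$. But the $p_\xi$ all lie in the generic filter and are therefore pairwise compatible; no $\Delta$-system or branching-tree argument on conditions \emph{in} $H_S$ can ever yield an antichain. Nor does freshness by itself give leverage here: the quotient genuinely adds $\aleph_{\omega_1}$-many fresh branches to $K$, so any argument that bounds the support of a single fresh branch must use something specific about how branches through $T$ relate to the forcing, and you never identify what that is. As written, the reduction from an $\aleph_1$-sized support to a countable one is an assertion, not an argument.

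The paper avoids this support-counting problem entirely. It first puts $T$ into $V_0[\bar G\times\bar H]$ for a $\P_1$-generic $\bar H$, and then shows that any branch $b$ not already in that model can be \emph{absorbed into a single extra coordinate}: one defines a $\mathbb{Q}$-name $\tau$ (where $\mathbb{Q}$ uses only the coordinates $\aleph_1\cup\{\omega_1\}$) so that $\tau$ reads off the value $b(\alpha)$ from what the $\omega_1$-th generic branch does at level $\alpha$, and checks that the set $K$ of conditions aligning $f(\omega_1)$ with the values forced for $\dot b$ is itself $\mathbb{Q}$-generic. Freshness of $b$ is what makes $K$ generic. The upshot is that $b$ is computed from a generic for a suborder isomorphic to $\P_1$, which then lands in some $V_0[H_\alpha\times\bar G]$. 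This ``one extra branch suffices'' trick is the missing idea in your proposal.
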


\begin{proof}
As before, we index the nodes of $T$ by $\langle\alpha, n\rangle$, for
$\alpha<\omega_1$ and $n<\omega$, where the first coordinate denotes the level of the node.  Similarly to the arguments in Lemma
\ref{submodel}, we can find a set $d\subset\aleph_{\omega_1}$ of size $\omega_1$, such that $T$ is in $V_0[\bar{G}\times
\bar{H}]$, where $\bar{H}=\{(t,f\upharpoonright d)\mid (t,f)\in H\}$. Then $\bar{H}$ is actually a generic filter for
$\mathbb{P}_1$, and we view $V_0[\bar{G}\times H]=V_0[\bar{G}\times \bar{H}][H']$, where $H'$ is
$\mathbb{P}/\bar{H}:=\{(t,f)\in\mathbb{P}\mid (t,f\upharpoonright\aleph_1)\in \bar{H}\}$ -generic.

Suppose $\dot{b}$ is a $\mathbb{P}$-name for a cofinal branch through $T$, which is not in  $V_0[\bar{G}\times \bar{H}]$. We say
$p\Vdash \dot{b}(\alpha)=n$ to mean that $p$ forces that $\dot{b}\cap \dot{T}_\alpha=\{\langle\alpha,n\rangle\}$. In
$V_0[\bar{G}\times \bar{H}]$ let $\mathbb{Q}:=\{(t,f\upharpoonright \aleph_1+1)\mid (t,f)\in\mathbb{P}/\bar{H}\}$. Define a
$\mathbb{Q}$-name $\tau$, by setting
$$\tau=\{(\langle\alpha,n\rangle, q)\mid \alpha<\omega_1, n<\omega, f^q(\omega_1)=n,   \exists p\in\mathbb{P}/\bar{H},
t^p\upharpoonright (\alpha+1)=t^q, p\Vdash \dot{b}(\alpha)=n\}.$$

Also let $K=\{(t,f)\mid \exists\alpha<\omega_1, \exists p\in H, t^p\upharpoonright\alpha+1=t,  \omega_1\in\dom(f), p\Vdash
\dot{b}(\alpha)=f(\omega_1)\}$. Since $b$ is not in $V_0[\bar{G}\times \bar{H}]$, it is straightforward to check that $K$ is
$\mathbb{Q}$ -generic, and also that $\dot{b}_H=\tau_{K}$.

Finally, since $\bar{H}* K\in V_0[\bar{G}\times H]$ is generic for the suborder  $\{(t,f\upharpoonright\aleph_1+1)\mid
(t,f)\in\mathbb{P}\}$, $K$ must be in $V_0[\bar{G}\times H_\alpha]$, for some $\alpha<\omega_1$. Then $b\in V_0[\bar{G}\times
H_\alpha]$.

\end{proof}

\begin{lemma} If $\P \times \C_{\omega_1}$ adds more than $\aleph_{\omega_1}$-many $\omega_1$-branches to $T$
then there is some $\alpha<\omega_1$, so that $\P_\alpha \times \C_{\omega_1}$ adds
more than $\aleph_{\omega_1}$ many $\omega_1$-branches to $T$.
\end{lemma}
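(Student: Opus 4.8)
The plan is to deduce the lemma directly from Claim \ref{claim1} by a counting argument, so that essentially no new forcing work is needed. Work in the extension $V_0[H\times\bar{G}]$ obtained from $\P\times\C_{\omega_1}$, and let $B$ denote the set of all cofinal ($\omega_1$-)branches of $T$ that exist there; the hypothesis of the lemma is exactly $|B|>\aleph_{\omega_1}$. Recall from the proof of Claim \ref{claim1} that $T\in V_0[\bar{G}\times\bar{H}]\subseteq V_0[H_\alpha\times\bar{G}]$ for every $\alpha\ge 1$, so for each such $\alpha$ it makes sense to set $B_\alpha=\{b\in B\mid b\in V_0[H_\alpha\times\bar{G}]\}$, the set of branches of $T$ added by $\P_\alpha\times\C_{\omega_1}$. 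Since the generic filters $H_\alpha$ increase with $\alpha$, the sequence $\langle B_\alpha\mid 1\le\alpha<\omega_1\rangle$ is $\subseteq$-increasing.

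First I would verify the decomposition $B=\bigcup_{\alpha<\omega_1}B_\alpha$. The inclusion $\supseteq$ is immediate from the definition of $B_\alpha$, while $\subseteq$ is precisely the content of Claim \ref{claim1}: every branch $b\in B$ already lies in some $V_0[H_\alpha\times\bar{G}]$, hence in the corresponding $B_\alpha$. Thus $B$ is expressed as a union of at most $\aleph_1$ of the sets $B_\alpha$.

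The decisive step is then pure cardinal arithmetic. Suppose toward a contradiction that $|B_\alpha|\le\aleph_{\omega_1}$ for every $\alpha<\omega_1$; that is, that no $\P_\alpha\times\C_{\omega_1}$ adds more than $\aleph_{\omega_1}$ branches. Then $B$ is a union of $\aleph_1$ sets each of size at most $\aleph_{\omega_1}$, so $|B|\le\aleph_1\cdot\aleph_{\omega_1}=\aleph_{\omega_1}$, contradicting $|B|>\aleph_{\omega_1}$. Hence there is some $\alpha<\omega_1$ with $|B_\alpha|>\aleph_{\omega_1}$, which is exactly the statement that $\P_\alpha\times\C_{\omega_1}$ adds more than $\aleph_{\omega_1}$ many $\omega_1$-branches to $T$.

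I do not expect a genuine obstacle inside this lemma: the substantive work has already been isolated in Claim \ref{claim1}, which localizes each branch to a bounded stage $\P_\alpha$, and everything else reduces to the identity $\aleph_1\cdot\aleph_{\omega_1}=\aleph_{\omega_1}$. The only point deserving care is that the union $\bigcup_\alpha B_\alpha$ be exhaustive, i.e.\ that no branch appears at the full stage $\P$ without appearing at some bounded stage; but this is exactly what Claim \ref{claim1} guarantees, so the argument closes.
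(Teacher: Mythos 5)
Your proof is correct and follows essentially the same route as the paper: both invoke Claim \ref{claim1} to place every branch into some $V_0[H_\alpha\times\bar{G}]$ and then apply a pigeonhole argument over the $\aleph_1$-many stages $\P_\alpha$. The only cosmetic difference is that the paper enumerates $\lambda>\aleph_{\omega_1}$ branches and extracts an unbounded index set on which $\alpha_i$ is constant (and then continues past the lemma's statement to the GCH contradiction), whereas you run the contrapositive counting argument $\aleph_1\cdot\aleph_{\omega_1}=\aleph_{\omega_1}$; these are interchangeable.
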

\begin{proof}
For every $\alpha<\omega_1$, let $H_\alpha$ be $\P_\alpha$-generic over $V_0$, induced by $H$. Then  $V_0[H_\alpha\times
\bar{G}]\subset V_0[H\times \bar{G}]=V[\bar{G}]$.

Suppose that for some $\lambda>\aleph_{\omega_1}$, $T$ has $\lambda$-many branches,  enumerate them by  $\langle b_i\mid
i<\lambda\rangle$.
For every $i<\lambda$, let $\alpha_i<\omega_1$ be such that $b_i\in V_0[H_{\alpha_i}\times\bar{G}]$ given by Claim \ref{claim1}.
Then for some $\alpha<\omega_1$ there is an unbounded
$I\subset\lambda$, such that for all $i\in I$, $\alpha_i=\alpha$.

But that implies that in $V_0[H_\alpha\times\bar{G}]$, $2^{\omega_1}>\aleph_{\omega_1}$, which is a contradiction since we
started with $V_0\models GCH$.
So the forcing extension of
$\P \times \C$ has at most $\aleph_{\omega_1}$ many $\omega_1$-branches of $T$, i.e
$\B=\aleph_{\omega_1}$ in this forcing extension.
\end{proof}

$V[G]=V_0[H][G]$ proves part (2) of Theorem \ref{constpart1}.

\begin{theorem}\label{constpart2} From a Mahlo cardinal, it is consistent with ZFC that $2^{\aleph_0}<\B=2^{\aleph_1}$, for every 
$\kappa<\twoalephone$ there is a Kurepa tree with (at least) $\kappa$-many maximal branches, but no Kurepa tree has 
$\twoalephone$-many maximal branches.

\end{theorem}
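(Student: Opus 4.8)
The plan is to force with a countable-support product of the Kurepa posets from Theorem~\ref{constpart1}, one for each target cardinality below a suitable weakly inaccessible cardinal. Start with a ground model $V_0\models\mathrm{GCH}$ containing an inaccessible cardinal $\theta$ (weakly inaccessible in $V_0$ is equivalent to strongly inaccessible under GCH); the goal is to arrange $2^{\aleph_0}=\aleph_1<\B=\twoalephone=\theta$ with $\theta$ weakly inaccessible. For each cardinal $\kappa$ with $\aleph_2\le\kappa<\theta$ let $\mathbb{K}_\kappa$ be the $\sigma$-closed poset adding an $(\aleph_1,\kappa)$-Kurepa tree $T_\kappa$ from the construction for Theorem~\ref{constpart1} (conditions $(t,f)$ with $\dom(f)\subseteq\kappa$), and let $\P=\prod_{\aleph_2\le\kappa<\theta}\mathbb{K}_\kappa$ with countable support. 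First I would record the routine forcing properties: $\P$ is $\sigma$-closed, so it adds no reals and keeps $2^{\aleph_0}=\aleph_1$; under GCH each $\mathbb{K}_\kappa$ is $\aleph_2$-Knaster (a $\Delta$-system argument after first fixing the countable tree part $t$ and then the root of the domains $\dom(f)$), and since the Knaster property is productive and preserved by countable-support products, $\P$ is $\aleph_2$-Knaster, hence $\aleph_2$-cc. Thus $\P$ preserves all cardinals and cofinalities, so $\theta$ remains weakly inaccessible. Counting nice names using $|\P|=\theta$ (which follows from $\theta^{\aleph_0}=\theta$) gives $\twoalephone\le\theta^{\aleph_1}=\theta$, while the trees $T_\kappa$ already force $\twoalephone\ge\sup_{\kappa<\theta}\kappa=\theta$; hence $\twoalephone=\theta$.

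Next I would show that for each $\kappa<\theta$ the generic tree $T_\kappa$ has \emph{exactly} $\kappa$ cofinal branches, so that every $\kappa\in[\aleph_2,\theta)$ is realized and $\B=\theta$. The lower bound is the standard density argument from Theorem~\ref{constpart1}. For the upper bound I would prove the key \emph{capturing lemma}, generalizing Lemma~\ref{submodel} and Claim~\ref{claim1} from the single Kurepa-plus-Cohen forcing there to the present product: writing $G$ for the generic and $G\!\restriction\!a$ for its restriction to a set $a$ of coordinates, every $\omega_1$-tree $T\in V_0[G]$ lies in some $V_0[G\!\restriction\!a]$ with $|a|\le\aleph_1$ (by $\aleph_2$-cc, since $T$ has only $\aleph_1$ nodes), and moreover \emph{every cofinal branch of $T$ already lies in $V_0[G\!\restriction\!a]$}. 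Applied to $T_\kappa$, whose tree part is captured by coordinate $\kappa$ together with $\aleph_1$ of its branch-indices, this forces any branch to be one of the $\kappa$ generic ones, giving exactly $\kappa$.

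The same capturing lemma then settles the last requirement, that \emph{no} Kurepa tree has exactly $\twoalephone=\theta$ branches. Given any Kurepa tree $T\in V_0[G]$, fix $a$ with $|a|\le\aleph_1$ as above. Since $|a|\le\aleph_1<\theta=\mathrm{cf}(\theta)$, the sub-product $\P\!\restriction\!a$ has size $<\theta$, so $V_0[G\!\restriction\!a]\models\twoalephone<\theta$; as all branches of $T$ lie in $V_0[G\!\restriction\!a]$, $T$ has fewer than $\theta$ branches. Hence $\B=\theta$ is a supremum but not a maximum, consistent with $\mathrm{cf}(\B)=\theta\ge\aleph_2$ as required by Lemma~\ref{bknotmax}. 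Together with $2^{\aleph_0}=\aleph_1<\theta=\twoalephone=\B$ and $\theta$ weakly inaccessible, this is exactly the statement of the theorem.

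The main obstacle is the capturing lemma, specifically the assertion that the ``tail'' forcing $\prod_{\kappa\notin a}\mathbb{K}_\kappa$ adds no new cofinal branch to a tree $T\in V_0[G\!\restriction\!a]$. This cannot follow from any general preservation theorem: a $\sigma$-closed forcing can certainly add a cofinal branch to a ground-model $\omega_1$-tree (forcing with a pruned tree itself is $\sigma$-closed), and a naive nice-name count only bounds the number of branches by $\theta$, not strictly below it. The argument must therefore exploit the specific product-of-Kurepa structure, as in Claim~\ref{claim1}: by $\aleph_2$-cc one reduces a putative new branch to one added by a single factor $\mathbb{K}_\kappa$ with $\kappa\notin a$, and one then shows that $\mathbb{K}_\kappa$, being generic and ``independent'' of $T$, adds no branch to $T$ --- for instance by factoring $\mathbb{K}_\kappa$ into its $\sigma$-closed tree-building part and a branch-placing part that is $\aleph_2$-Knaster over the tree, and applying the Knaster branch-preservation argument used for the Cohen forcing $\C$ in Theorem~\ref{constpart1}. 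Carrying out this reduction carefully, uniformly over all trees $T$ and all factors, is where the real work lies.
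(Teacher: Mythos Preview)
Your approach is genuinely different from the paper's and is a natural direct-forcing attack, but the paper takes a much shorter route: it quotes a model of L\"ucke in which $GMA_{\omega_1}$, CH, ``$2^{\aleph_1}$ is weakly inaccessible'', and ``every $\Sigma^1_1$ subset of ${}^{\omega_1}\omega_1$ of size $2^{\aleph_1}$ contains a perfect set'' all hold simultaneously.  Then for each $\aleph_2\le\kappa<2^{\aleph_1}$ the single Kurepa poset $\mathbb{K}_\kappa$ satisfies the hypotheses of $GMA_{\omega_1}$ and only $\kappa$ dense sets are needed, so a filter meeting them produces a Kurepa tree with exactly $\kappa$ branches; the $\Sigma^1_1$ perfect-set property rules out any Kurepa tree with $2^{\aleph_1}$ branches.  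No product forcing, no capturing lemma.

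Your plan is plausible but the sketch of the capturing lemma has real problems.  First, the step ``by $\aleph_2$-cc one reduces a putative new branch to one added by a single factor $\mathbb{K}_\kappa$'' is not correct: a branch is an $\aleph_1$-sized object, so $\aleph_2$-cc only localizes it to $\aleph_1$-many coordinates, never to one.  Second, the proposed factoring ``tree-building part $*$ $\aleph_2$-Knaster branch-placing part'' does not yield what you want: the Cohen argument in Theorem~\ref{constpart1} uses that $\C$ is $\aleph_1$-Knaster to find, among $\aleph_1$-many conditions deciding successive levels of a branch, a pairwise compatible subfamily of size $\aleph_1$.  Your tail forcing is only $\aleph_2$-Knaster, which gives nothing for a family of size $\aleph_1$; a $\sigma$-closed $\aleph_2$-Knaster poset can certainly add a branch to a ground-model $\omega_1$-tree (forcing with the tree itself is such a poset).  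So neither ingredient of your sketch works as written.  A correct argument would have to mimic the specific name-construction of Claim~\ref{claim1} inside each Kurepa factor, showing that any new branch of $T$ is coded by a single generic branch-index and hence already appears in the factor adding that index; carrying this through the countable-support product uniformly is possible but is substantially more work than you indicate, and you also need the separate (standard but non-trivial) fact that a single $\mathbb{K}_\kappa$ adds no branches to its generic tree beyond the $\kappa$ generic ones.  Without these pieces the conclusion that $T_\kappa$ has \emph{exactly} $\kappa$ branches, and that no tree has $\theta$ branches, does not follow from your counting alone: summing fewer-than-$\theta$ branches over $\theta$-many capturing submodels only gives $\le\theta$, not $<\theta$.
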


\begin{proof}
The proof uses the  forcing axiom principle, $GMA$, defined by Shelah and a maximality principle from \cite{LuckeMaximalityPrinciples} that generalizes $GMA$.

We start with some definitions. Let $\kappa$ be a regular cardinal. A poset $\mathbb{P}$ is  {\it stationary $\kappa^+$-linked}, if for every sequence of conditions $\langle p_\gamma\mid \gamma<\kappa^+\rangle$, there is a regressive function $f:\kappa^+\rightarrow \kappa^+$, such that for some club $C\subset \kappa^+$, for all $\gamma,\delta\in C$ with cofinality $\kappa$, $f(\gamma)=f(\delta)$ implies $p_\gamma$ and $p_\delta$ are compatible.  Let $\Gamma_{\kappa}$ be the class of $\kappa$-closed, stationary $\kappa^+$-linked, well met poset $\mathbb{P}$ with greatest lower bounds.

\begin{definition}
For a regular $\kappa$, $GMA_\kappa$ states that for every $\mathbb{P}\in \Gamma_\kappa$, and for every collection of less than $2^\kappa$ many dense sets there is a filter for $\mathbb{P}$ meeting them.
\end{definition}

\begin{definition}
For a regular $\kappa$, $SMP_n(\kappa)$ says the following: 
\begin{itemize}
\item
$\kappa^{<\kappa}=\kappa$; 
\item
for any $\Sigma_n$ statement $\phi$  with parameters in $H(2^\kappa)$ and any $\mathbb{P}\in \Gamma_\kappa$, if for all  $\mathbb{P}$-generic $G$, and all $\kappa$ closed, $\kappa^+$-c.c. posets $\mathbb{Q}\in V[G]$, we have that $V[G]\models 1_{\mathbb{Q}}\Vdash \phi$, then $\phi$ is true in $V$.
\end{itemize}

$SMP(\kappa)$ is the statement that $SMP_n(\kappa)$ holds for all $n$. 
\end{definition}
\begin{fact}
(\cite{LuckeMaximalityPrinciples}) If $\kappa$ satisfies $\kappa=\kappa^{<\kappa}$, then a model of $SMP(\kappa)$ can be forced 
starting from a Mahlo cardinal $\theta>\kappa$. 
\end{fact}

The following is also due to \cite{LuckeMaximalityPrinciples}. We include the proof for completeness.
\begin{prop}
Let $\kappa$ be a regular cardinal.
\begin{enumerate}
\item
If for all $\tau<2^\kappa$, $\tau^{<\kappa}<2^\kappa$, then $SMP_1(\kappa)$ implies that $GMA_\kappa$.
\item
$SMP_2(\kappa)$ implies that $2^\kappa$ is weakly inaccessible and that for all $\tau<2^\kappa$, $\tau^{<\kappa}<2^\kappa$.
\item
$SMP_2(\kappa)$ implies that every $\boldsymbol{\Sigma_1^1}$-subset of $\kappa^{\kappa}$ of cardinality $2^{\kappa}$ contains a 
perfect set. \footnote{Recall that $A\subset \kappa^\kappa$ contains a perfect set if there is a continuous injection 
$2^\kappa\rightarrow \kappa^\kappa$ with range contained in $A$. We assume the discrete topology on $2$ and $\kappa$ 
respectively.}
\end{enumerate}

\end{prop}
\begin{proof}

For the first item, suppose that $\tau<2^\kappa\rightarrow\tau^{<\kappa}<2^\kappa$ and $SMP_1(\kappa)$ holds. Let $\mathbb{P}\in\Gamma_\kappa$ and let $\mathcal{D}$ be a a family of less than $2^\kappa$ many dense sets. By our cardinal arithmetic assumptions, we may assume that the underlying set of $\mathbb{P}$ is an ordinal of size less than $2^\kappa$. Let $\phi$ be the statement that there exists a filter for $\mathbb{P}$ meeting every set in $\mathcal{D}$. Note that $\phi$ is $\Sigma_1$ and uses the parameters $\mathbb{P}$ and $\mathcal{D}$, which are in $H(2^\kappa)$. If $G$ is a $\mathbb{P}$-generic filter, then  $\phi$ will hold in any forcing extension of $V[G]$, so by $SMP_1(\kappa)$, we have that $\phi$ is true in $V$.

For the second item, suppose that $SMP_2(\kappa)$ holds, and let $\tau<2^\kappa$. Let $\phi$ be the statement that there is no surjection from $\tau^{<\kappa}$ to $\mathcal{P}(\kappa)$. Then $\phi$ is $\Sigma_2$ with parameters in $H(2^\kappa)$. Denote $\lambda:=\tau^{<\kappa}$. Let $\mathbb{\dot{Q}}$ be an $Add(\kappa, \lambda^+)$-name for a $\kappa$-closed forcing with $\kappa^+$ c.c. Note that since this forcing is $\kappa$-closed, $\tau^{<\kappa}$ is computed the same as in $V$. Then $\phi$ is true after forcing by $Add(\kappa, \lambda^+)*\mathbb{\dot{Q}}$.  By $SMP_2(\kappa)$ ,  $\phi$ is true in $V$. I.e. $\tau^{<\kappa}<2^\kappa$.

Next, we show that $2^\kappa$ is a limit cardinal. Suppose that $\kappa\leq \mu<2^\kappa$, and let $\psi$ be the statement $\exists x (\mu<|x|<2^\kappa)$. This is $\Sigma_2$ with parameters in $H(2^\kappa)$. Let $\nu:=2^\kappa$. Then if  $\mathbb{\dot{Q}}$ is an $Add(\kappa, \nu^+)$-name for a $\kappa$-closed forcing with $\kappa^+$ c.c., after forcing with 
$Add(\kappa, \nu^+)*\mathbb{\dot{Q}}$, $\psi$ holds as witnessed by taking $x$ to be $\nu$. So by $SMP_2(\kappa)$ ,  $\psi$ is true in $V$. So, $2^\kappa$ is a limit cardinal.

By a similar argument, $SMP_2(\kappa)$ implies that for all $\kappa\leq\mu<2^\kappa$, $2^\mu=2^\kappa$. Therefore $2^\kappa$ is regular (since by standard cardinal arithmetic if $\theta$ is singular and $2^\mu$ is constant for all large $\mu<\theta$, then $2^\theta=2^\mu$ for all large $\mu<\theta$). 

Finally, we show the third item. Again, suppose that $SMP_2(\kappa)$ holds and $T$ is a tree of $\kappa^{<\kappa}\times\kappa^{<\kappa}$. We want to show that $p[T]$ contains a prefect set or has cardinality less than $2^\kappa$. \footnote{Recall that $A\subset\kappa^\kappa$ is $\boldsymbol{\Sigma_1^1}$ iff $A=p[T]$ for some tree $T\subset\kappa^{<\kappa}\times\kappa^{<\kappa}$.}  As above, denote $\nu:=2^\kappa$. Let $\mathbb{\dot{Q}}$ be an $Add(\kappa, \nu^+)$-name for a $\kappa$-closed forcing with $\kappa^+$ c.c., and let $G*H$ be generic for $Add(\kappa, \nu^+)*\mathbb{\dot{Q}}$. Note that $V$ and $V[G][H]$ have the same cardinals. We have two cases:
\begin{enumerate}
\item
$(p[T])^V\subsetneq p([T])^{V[G][H]}$,
\item
$V[G][H]\models |p[T]|<2^\kappa$.
\end{enumerate}

Suppose the first case holds. Then by Lemma 7.5 in \cite{LuckeDefinability}, one can construct a continuous order-embedding $i:2^{<\kappa}\rightarrow \kappa^{<\kappa}\times\kappa^{<\kappa}$ with range contained in $T$, which witnesses that the projection of all cofinal branches, $p[T]$, contains a perfect set. The idea is to take a new branch that is forced not to belong to $V$ and at the inductive step, pick conditions that force splitting along that branch; the witnessing splitting nodes are used to define $i$, so that if $s,s'\in 2^{<\kappa}$ are incompatible, then so are $i(s)$ and $i(s')$. For the full details, see Lemma 7.5 in \cite{LuckeDefinability}.

It follows that the statement that there is such an embedding or $|p[T]|<2^\kappa$ is true in $V[G][H]$. This is $\Sigma_2$ with parameters in $H(2^\kappa)$ (since $\kappa^{<\kappa}=\kappa$), so by $SMP_2(\kappa)$, it is true in $V$. This concludes the proof of the last item.

\end{proof}

\begin{remark}
Regarding the first item of the above proposition, actually the converse also holds. I.e. assuming for all $\tau<2^\kappa$, $\tau^{<\kappa}<2^\kappa$, we have $SMP_1(\kappa)$ if and only if $GMA_\kappa$ and $\kappa^{<\kappa}=\kappa$. 
\end{remark}
Now, towards the proof of Theorem \ref{constpart2}, let $V$ be a model of $SMP_2(\omega_1)$. By the above proposition, in $V$ we have:
\begin{enumerate}
\item
$GMA_{\omega_1}$,
\item
$CH$,
\item
$2^{\omega_1}$ is weakly inaccessible,
\item
every $\boldsymbol{\Sigma_1^1}$-subset of $\omega_1^{\omega_1}$ of cardinality $2^{\omega_1}$ contains a perfect set.
\end{enumerate}
We claim that this is the desired model. Let $\omega_1<\lambda<2^{\omega_1}$. To see that there is a Kurepa tree with (at least) 
$\lambda$-many maximal branches, let $\mathbb{P}$ be the standard poset to add such a tree (i.e. we take the poset from earlier 
but with $\lambda$ in place of $\aleph_{\omega_1}$). Namely, conditions are pairs $(t,f)$, where:
\begin{itemize}
\item
$t$ is a tree of height $\beta+1$ for some $\beta<\omega_1$ and countable levels;
\item
$f$ is a function with $\dom(f)\subset \lambda$, $|\dom(f)|=\omega$, and $\ran(f)= t_\beta$, where $t_\beta$ is the
$\beta$-th level of $t$.
\end{itemize}
The order is the same as in the poset from Theorem \ref{constpart1}.

Then $\mathbb{P}$ satisfies the hypothesis of $GMA_{\omega_1}$\footnote{Note that by CH there are only $\aleph_1$-many 
possibilities for the first coordinate and we can apply a $\Delta$ -system lemma for the second 
coordinate to prove that $\mathbb{P}$ is stationary $\kappa^+$-linked.}, and there are only $\lambda$-many dense sets to 
meet in order to get a Kurepa tree with (at least) $\lambda$-many branches. So, by $GMA_{\omega_1}$, there is a Kurepa tree with 
(at least) $\lambda$ many branches.

Finally, we use the last item of the properties listed above to argue that there are no Kurepa trees with $2^{\omega_1}$-many branches. Suppose that $T$ is a Kurepa tree. We claim that the set of branches $[T]$ is a closed set that does not contain a perfect set. For suppose that $g:2^{\omega_1}\rightarrow {\omega_1}^{\omega_1}$ is a continuous injection with range contained in $[T]$. Then, build  sequences $\langle p_s\mid s\in 2^{<\omega}\rangle$ and $\langle\alpha_n\mid n<\omega\rangle$, such that:
\begin{itemize}
\item
if $s'\supset s$, then $p_{s'}<_T p_s$,
\item
if $|s|=n$, then $\alpha_n=\dom p_s$,
\item
for each $s$, $p_{s\smallfrown 0}\neq p_{s\smallfrown 1}$.
\end{itemize} 
We do this by induction on $|s|$, using continuity. Then let $\alpha=\sup_n\alpha_n$ and for all $\eta\in 2^\omega$, 
let $p_\eta:=\bigcup_n p_{\eta\upharpoonright n}$. But then if $\eta\neq\delta$, $p_\eta\neq p_\delta$. 
So, the $\alpha$-th level of the tree has $2^\omega$ many elements. Contradiction with $T$ being a Kurepa tree. So $[T]$ does not 
contain a perfect set. By our assumption in item (4), it follows that $|[T]|<2^{\omega_1}$.
\end{proof}

\begin{remarks}
\begin{enumerate}[(i)]
 \item There is a similar argument to the above proof in Section IV.2 in \cite{syandfriends}. The idea of using Kurepa 
tree 
to get counterexamples of the perfect set property goes back to \cite{MeklerVaananen}.
\item For simplicity we used a Mahlo cardinal, which suffices to provide us a model of $SMP_2(\omega_1)$. A slightly weaker 
hypothesis would be enough. For precise upper and lower consistency strength bounds on $SMP(\omega_1)$, see Theorem 4.5 in 
\cite{LuckeMaximalityPrinciples}.
\end{enumerate}
\end{remarks}

Finally we want to note that the results presented here can be extended to
$\kappa$-Kurepa trees with $\kappa\ge\aleph_2$.

\subsection*{Acknowledgements} The ideas of Lemma \ref{bknotmax} and Theorem \ref{constpart2} were communicated to the
authors by Philipp L\"{u}cke. The consistency results of Section \ref{Consistency} were based on early ideas of Stevo Todorcevic.
The authors would also like to thank John Baldwin for his useful feedback on an earlier version of the manuscript that
helped improve the exposition.

\bibliographystyle{plain}   

\bibliography{AllBibliography.bib}

\end{document}